\numberwithin{equation}{section}
\newtheorem{theorem}{Theorem}[section]
\newtheorem{proposition}[theorem]{Proposition}
\newtheorem{corollary}[theorem]{Corollary}
\newtheorem{lemma}[theorem]{Lemma}
\theoremstyle{definition}
\newtheorem{remark}[theorem]{Remark}
\newtheorem{example}[theorem]{Example}
\def\doubleunderline#1{\underline{\underline{#1}}}
\def\endproof{\hfill$\square$\medskip}
\def\ZZ{\mathbb{Z}}
\def\PP{\mathcal{P}}
\def\ii{\mathbf{i}}
\def\jj{\mathbf{j}}
\def\sgn{\operatorname{sgn}}
\dedicatory{To George Andrews on the occasion of his 80th birthday}
\begin{document}


\title
{Noncommutative Catalan numbers}

\author{Arkady Berenstein}
\address{\noindent Department of Mathematics, University of Oregon,
Eugene, OR 97403, USA} \email{arkadiy@math.uoregon.edu}

\author{Vladimir Retakh}
\address{Department of Mathematics, Rutgers University, Piscataway, NJ 08854, USA}
\email{vretakh@math.rutgers.edu}

\thanks{This work was partially supported by the NSF grant~DMS-1403527 (A.~B.)}

\begin{abstract} The goal of this paper is to introduce and study {\it noncommutative Catalan numbers} $C_n$ which belong to the free Laurent polynomial algebra ${\mathcal L}_n$ in $n$ generators.  Our noncommutative numbers admit interesting (commutative and noncommutative) specializations, one of them related to  Garsia-Haiman $(q,t)$-versions, another -- to solving noncommutative quadratic equations.  We also establish total positivity of the corresponding (noncommutative) Hankel matrices $H_n$ and introduce accompanying  {\it noncommutative binomial coefficients} ${\stretchleftright{\llparenthesis}{\begin{matrix} n \\ k \end{matrix}}{\rrparenthesis}}\in {\mathcal L}_{n+k-1},{\stretchleftright{\llparenthesis}{\begin{matrix} n \\ k \end{matrix}}{\rrparenthesis}}'\in {\mathcal L}_n$.

\end{abstract}

\maketitle

\tableofcontents

\section{Introduction}

Catalan numbers $c_n=\frac{1}{n+1}\binom{2n}{n}$, $n\ge 0$ are important combinatorial objects which satisfy a number of remarkable properties such as:

$\bullet$ The recursion $c_{n+1}=\sum\limits_{k=0}^n c_k c_{n-k}$ for all $n\ge 0$ (with $c_0=c_1=1$).

$\bullet$ the determinantal identities $\det\begin{pmatrix}
c_m&c_{m+1}&\dots &c_{m+n}\\
c_{m+1}&c_{m+2}&\dots &c_{m+n+1}\\
 & & \dots & \\
c_{m+n}&c_{m+n+1}&\dots &c_{m+2n}
\end{pmatrix}=1$ for $n\ge 0$, $m\in\{0,1\}$. 

Catalan numbers admit various $q$-deformations (\cite{A,FH,H}) and  $(q,t)$-deformations (\cite{GH,GHa,H}).

In this paper we introduce and study {\it noncommutative Catalan numbers} $C_n$, $n\ge 1$ which are totally noncommutative Laurent polynomials in $n$ variables and satisfy analogues of the recursion and the determinantal identities (Proposition \ref{pr:recursion C} and equation \eqref{eq:principal quasi-minor}). It turns out that specializing these variables to appropriate powers of $q$, we recover Garsia-Haiman $(q,1)$-Catalan numbers. 
Catalan numbers also satisfy a combinatorial identity (formula (4.9) in \cite{C}) involving their truncated counterparts $c_n^k=\binom{n+k}{k}-\binom{n+k}{k-1}$ (so that $c_n=c_n^n=c_n^{n-1}$):
\begin{equation}
\label{eq:classical catalan via truncated catalan}
c_n=\sum\limits_{\substack{a,b\in \ZZ_{\ge 0}:\\a+b\leq n, a-b=d}}
c_{n-b}^a c_{n-a}^b
\end{equation}
for each $n\in \ZZ_{\ge 0}$ and each $d\in \ZZ$ with $|d|\le n$ (e.g., the right hand side does not depend on $d$). 
A $q$-deformation of $c_n^k$ was discussed in \cite {CR} under the name of $q$-ballot numbers.

We introduce noncommutative analogues of  truncated Catalan numbers and establish a noncommutative version of \eqref{eq:classical catalan via truncated catalan} (Theorem \ref{th:catalan via truncated catalans}). It is curious that the $c_n^k$ satisfy three more combinatorial identities, two of which involve binomial coefficients:
\begin{equation}
\label{eq:three identities}
c_{n+1}^k=\sum\limits_{j=0}^k c_j c_{n-j}^{k-j},~\sum\limits_{j=0}^k (-1)^j c_{n+k-j}^j\cdot \binom{n-j}{k-j}=0, ~c_{m+n}^k=\sum\limits_{\ell=0}^n c_{m+\ell}^{k-\ell}\cdot \binom{n}{\ell}\ ,
\end{equation}
where $0\le k<n$ in the first two identities and $0\le k\le m+n$ in the third one.

We establish a noncommutative generalization of the first identity \eqref{eq:three identities} (Proposition \ref{pr:recursion Cnk}(c)), 
define appropriate noncommutative versions  ${\stretchleftright{\llparenthesis}{\begin{matrix} n \\ k \end{matrix}}{\rrparenthesis}}$ and  ${\stretchleftright{\llparenthesis}{\begin{matrix} n \\ k \end{matrix}}{\rrparenthesis}}'$ 
of  binomial coefficients and establish analogues of the last two identities \eqref{eq:three identities} with these coefficients (Corollary \ref{cor:mult truncated catalan} and Theorem \ref{th:alternating recursion}) as well as an analogue of the multiplication law for both kinds of noncommutative binomial coefficients (Theorem \ref{th:mult binom}). 

In fact, these constructions and results extend our previous work on Noncommutative Laurent Phenomenon (\cite{BR,BR2}) and we expect more such Phenomena to emerge in Combinatorics, Representation Theory, Topology and related fields.

The paper is organized as follows: Section \ref{sect:main} contains notation and main results and the proofs are given in Section \ref{sect:proofs}.

\subsection{Acknowledgments} This work was partly done during our visits to
Max-Planck-Institut f\"ur Mathematik and Institut des Hautes \'Etudes
Scientifiques. We gratefully acknowledge the support of these
institutions. We thank Philippe Di Francesco and Rinat Kedem for their comments on the first version of the paper, particularly for explaining to us a relationship between noncommutative Stieltjes continued fractions and our noncommutative Catalan series (see Remark \ref{rem:DifK}).

\section{Notation and main results}

\label{sect:main}

Let $F$ be the free group generated by $x_k$, $k\in \ZZ_{\ge 0}$ and  $F_m$ be the (free) subgroup of $F$ generated by $x_0,\ldots,x_m$. 

Denote by $\tilde \PP_n$ the set of all monotonic lattice paths in $[0,n]\times [0,n]$ from $(0,0)$ to $(n,n)$. Clearly, $|\tilde \PP_n|=\binom{2n}{n}$. We say that $P\in \tilde \PP_n$ is {\it Catalan} if for each point $p=(p_1,p_2)\in P$ one has $c(p)\ge 0$, where $c(p_1,p_2):=p_1-p_2$ is the content of $p$. Denote by $\PP_n\subset \tilde \PP_n$ the set of all Catalan paths in $[0,n]\times [0,n]$. Clearly, $|\PP_n|=\frac{1}{n+1}\binom{2n}{n}$ is the $n$-th Catalan number, which justifies the terminology.

We say that a point $p=(p_1,p_2)$ of $P\in \tilde \PP_n$ is a {\it southeast (resp. northwest) corner} of $P$  if $(p_1-1,p_2)\in P$ and $(p_1,p_2+1)\in P$ (resp. $(p_1,p_2-1)\in P$ and $(p_1+1,p_2)\in P$).

 To each $P\in \PP_n$ we assign an element $M_P\in F_n$ by
\begin{equation}
M_P=\overset{\longrightarrow}\prod x_{c(p)}^{\sgn(p)}\ ,
\end{equation}
where the product is  over all corners $p\in P$ (taken in the natural order)
and 
$\sgn(p)=\begin{cases}
1 & \text{if $p$ is southeast}\\
-1& \text{if $p$ is northwest}
\end{cases}$.

\begin{figure}[h!]
\centering
\begin{tikzpicture} [yscale=0.91,xscale=0.91]
\draw [help lines] (0,0) grid (3,3);
\draw[thick] (0,0) -- (2,0) -- (2,2) -- (3,2) -- (3,3);
\draw [<->] (0,3.2) -- (0,0) -- (3.2,0);

\node at (0,-0.2) {(0,0)};
\node at (2,-0.2) {(2,0)};

\node at (2,2.2) {(2,2)};
\node at (3,1.8) {(3,2)};
\node at (3,3.2) {(3,3)};
\end{tikzpicture}
\caption{$M_P=x_2x_0^{-1}x_1$ for the above path $P\in \PP_3$}
\end{figure}

We define the {\it noncommutative Catalan number} $C_n\in \ZZ F_n$ by
\begin{equation}
\label{eq: main def}
C_n=\sum_{P\in \PP_n} M_P\ .
\end{equation}

Clearly, under the counit homomorphism $\varepsilon:\ZZ F\to \ZZ$ ($x_k\mapsto 1$) the image $\varepsilon(C_n)$ is $|\PP_n|$, the ordinary Catalan number. 


Noncommutative Catalan numbers exhibit some symmetries, the first of which is  an anti-automorphism $\overline{\cdot}$ of $\ZZ F$ such that 
$\overline x_k=x_k$ for $k\in \ZZ_{\ge 0}$.
 
\begin{proposition}
\label{pr:bar invariance}
$\overline C_n=C_n$ for all $n\ge 0$.
\end{proposition}

\begin{proof} 
Define an involution $s_n:\ZZ^2\to \ZZ^2$ by $s_n(x,y)=(n-y,n-x)$. Clearly, $s_n(\PP_n)=\PP_n$. It is easy to see that 
\begin{equation}
\label{eq:barMP}
\overline M_P=M_{s_n(P)}
\end{equation} 
for all $P\in \PP_n$. Therefore, 
$\overline C_n=\sum\limits_{P\in \PP_n} \overline M_P=\sum\limits_{P\in \PP_n} M_{s_n(P)}=\sum\limits_{P\in \PP_n} M_P=C_n$
for all $n\ge 0$.

The proposition is proved. 
\end{proof}

\begin{example}$C_0=x_0$, $C_1=x_1$, $C_2=x_2+x_1x_0^{-1}x_1$, 
$$C_3=x_3+x_2x_1^{-1}x_2+x_2x_0^{-1}x_1+x_1x_0^{-1}x_2+x_1x_0^{-1}x_1x_0^{-1}x_1\ ,$$
$$
C_4=x_4+x_3x_2^{-1}x_3+x_2x_0^{-1}x_2+x_3x_1^{-1}x_2+x_2x_1^{-1}x_3+x_3x_0^{-1}x_1+x_1x_0^{-1}x_3
+x_2x_1^{-1}x_2x_1^{-1}x_2$$
$$+x_1x_0^{-1}x_2x_0^{-1}x_1+x_2x_1^{-1}x_2x_0^{-1}x_1+x_1x_0^{-1}x_2x_1^{-1}x_2
+x_2x_0^{-1}x_1x_0^{-1}x_1+x_1x_0^{-1}x_1x_0^{-1}x_2+x_1x_0^{-1}x_1x_0^{-1}x_1x_0^{-1}x_1\ .
$$
\end{example}



It turns out that our noncommutative Catalan numbers satisfy the following generalization of the well-known classical recursion, 
which we prove in Section \ref{subsect:3.1}. 

\begin{proposition}
\label{pr:recursion C}
For $n\ge 0$ one has
\begin{equation}
\label{eq:recursion C}
C_{n+1}=\sum _{k=0}^n C_kx_0^{-1}T(C_{n-k}), ~ C_{n+1}=\sum _{k=0}^n T(C_k)x_0^{-1}C_{n-k} 
\end{equation}
for all $n\in \ZZ_{\ge 0}$,
where $T:\ZZ F\to \ZZ F$ is an endomorphism of $\ZZ F$ given by 
$T(x_k)=x_{k+1}$ for all $k\in \ZZ_{\ge 0}$.

\end{proposition}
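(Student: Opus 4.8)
The plan is to prove the second identity combinatorially, by the classical first-return-to-the-diagonal decomposition of Catalan paths, and then to deduce the first identity from it using the bar-invariance of Proposition~\ref{pr:bar invariance}. The starting point is to read off $M_P$ from the step word of $P$ in the letters $E=(1,0)$ and $N=(0,1)$: a corner occurs at each place where two consecutive steps differ, an $EN$-corner being a southeast corner (factor $x_c^{+1}$) and an $NE$-corner a northwest corner (factor $x_c^{-1}$), where $c=p_1-p_2$ is the content at that point, and these factors are multiplied in the order in which the corners are met along $P$.

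I would fix $P\in\PP_{n+1}$ and let $(j,j)$, with $1\le j\le n+1$, be its first return to the diagonal. Since every nonempty Catalan path begins with $E$ and ends with $N$ (a first step $N$ or a last step other than $N$ would force content $-1$), the path factors uniquely as $P=E\,Q\,N\,R$, where $Q$ runs from $(1,0)$ to $(j,j-1)$ through points of content $\ge 1$ and $R$ runs from $(j,j)$ to $(n+1,n+1)$. Translating $Q$ by $(-1,0)$ gives a Catalan path $\tilde Q\in\PP_{j-1}$ whose contents are one less than those of $Q$, and translating $R$ by $(-j,-j)$ gives $\tilde R\in\PP_{n+1-j}$ with unchanged contents; this is a bijection $P\leftrightarrow(\tilde Q,\tilde R)$ as $k:=j-1$ ranges over $0,\dots,n$.

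The heart of the argument is to match corners across this factorization. Because a nonempty $\tilde Q$ starts with $E$ and ends with $N$, the splices $E\,|\,Q$ and $Q\,|\,N$ create no corner, so the corners of $P$ interior to $Q$ are exactly those of $\tilde Q$ with content raised by $1$, contributing $T(M_{\tilde Q})$ in path order; the corners interior to $R$ are exactly those of $\tilde R$, contributing $M_{\tilde R}$; and the only splice corner is $N\,|\,R$, which (when $R$ is nonempty, hence begins with $E$) is a northwest corner of content $0$ at $(j,j)$, contributing $x_0^{-1}$. As these three blocks are met in the order $Q$, then $(j,j)$, then $R$, I obtain $M_P=T(M_{\tilde Q})\,x_0^{-1}\,M_{\tilde R}$. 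The degenerate cases are absorbed by the convention $C_0=x_0$: when $Q$ is empty ($k=0$) the splice $E\,|\,N$ at $(1,0)$ is a southeast corner of content $1$ giving $x_1=T(x_0)=T(C_0)$, and when $R$ is empty ($k=n$) there is no northwest corner and no $R$-block, matching $x_0^{-1}C_0=x_0^{-1}x_0=1$. Summing over the bijection then yields
\[
C_{n+1}=\sum_{k=0}^{n} T(C_k)\,x_0^{-1}\,C_{n-k},
\]
the second identity.

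Finally, I would apply the anti-automorphism $\overline{\,\cdot\,}$ to this identity. Using $\overline{C_m}=C_m$ from Proposition~\ref{pr:bar invariance}, $\overline{x_0^{-1}}=x_0^{-1}$, and the elementary fact $\overline{T(y)}=T(\overline y)$ (both sides reverse the word and raise all indices by $1$ on a monomial $y$), the right-hand side becomes $\sum_k C_{n-k}\,x_0^{-1}\,T(C_k)$, and re-indexing $k\mapsto n-k$ gives $C_{n+1}=\sum_{k=0}^n C_k\,x_0^{-1}\,T(C_{n-k})$, the first identity. The main obstacle is precisely the corner bookkeeping of the third paragraph: one must verify that the three splice points create no corner other than the single northwest corner at $(j,j)$ — which relies essentially on nonempty Catalan paths starting with $E$ and ending with $N$ — and that the two degenerate terms $k=0$ and $k=n$ are produced correctly by the convention $C_0=x_0$.
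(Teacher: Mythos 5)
Your proof is correct, and the underlying combinatorial idea --- cutting a Catalan path at a contact with the diagonal, which turns the northwest corner there into the factor $x_0^{-1}$ and shifts the contents of the strict excursion up by one --- is the same one the paper uses. The packaging differs, though: the paper does not argue on paths and corners directly but reduces Proposition \ref{pr:recursion C} to the more general truncated recursion $\tilde C_{n+1}^k=\sum_{i=0}^k \tilde C_i^i\,T(\tilde C_{n-i}^{k-i})$ of Proposition \ref{pr:recursion Cnk}(c), specialized to $k=n$; that recursion is in turn proved via the encoding of paths by the sequences $\jj(P)$ and the \emph{last}-touch decomposition of Lemma \ref{le:Jdecomposition} (the largest $i$ with $j_i=i$), which is the mirror image of your \emph{first}-return decomposition. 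Accordingly the paper obtains the first identity of \eqref{eq:recursion C} directly and derives the second by applying $\overline{\,\cdot\,}$, while you obtain the second directly and derive the first by $\overline{\,\cdot\,}$; both transfers use Proposition \ref{pr:bar invariance} and the commutation of $T$ with the bar anti-automorphism in the same way. Your version is self-contained and makes the corner bookkeeping fully explicit (the splices creating no spurious corners because nonempty Catalan paths start with $E$ and end with $N$, and the degenerate terms $k=0$ and $k=n$ matching the convention $C_0=x_0$), which is precisely what the paper leaves implicit inside the sequence encoding; the paper's route buys the stronger statement for all truncated numbers $\tilde C_n^k$ at once, which it needs again for Theorem \ref{th:det GH}.
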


For example, $C_2=T(C_1)+C_1x_0^{-1}T(C_0)$ and $C_3=T(C_2)+C_1x_0^{-1}T(C_1)+C_2x_0^{-1}T(C_0)$.

The following is an immediate corollary of Proposition \ref{pr:recursion C}.
\begin{corollary} 
\label{cor:C series}
The formal power series ${\bf C}(t)=\sum\limits_{n=0}^\infty C_nt^n\in (\ZZ F)[[t]]$ satisfies:
\begin{equation}
\label{eq:C series-1}
{\bf C}(t)=x_0+t{\bf C}(t)x_0^{-1}T({\bf C}(t)),~T({\bf C}(t))x_0^{-1}{\bf C}(t)={\bf C}(t)x_0^{-1}T({\bf C}(t)) \ ,
\end{equation}

\end{corollary}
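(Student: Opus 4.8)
The plan is to derive Corollary~\ref{cor:C series} directly from the two recursions in Proposition~\ref{pr:recursion C} by packaging them into the generating series $\mathbf{C}(t)=\sum_{n\ge 0}C_n t^n$. First I would record that $T:\ZZ F\to \ZZ F$ extends coefficientwise to an endomorphism of the power series ring $(\ZZ F)[[t]]$ via $T(\mathbf{C}(t))=\sum_{n\ge 0}T(C_n)t^n$, so that applying $T$ and extracting coefficients commute. The key observation is that the right-hand side of the first recursion is exactly the coefficient of $t^{n+1}$ in the Cauchy product $t\,\mathbf{C}(t)\,x_0^{-1}\,T(\mathbf{C}(t))$: indeed,
\begin{equation}
t\,\mathbf{C}(t)\,x_0^{-1}\,T(\mathbf{C}(t))=\sum_{n\ge 0}\Bigl(\sum_{k=0}^{n}C_k\,x_0^{-1}\,T(C_{n-k})\Bigr)t^{n+1}\ .
\end{equation}

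Next I would compare coefficients term by term. The constant term ($t^0$) of $\mathbf{C}(t)$ is $C_0=x_0$, while the product $t\,\mathbf{C}(t)x_0^{-1}T(\mathbf{C}(t))$ has no constant term, so at order $t^0$ the claimed identity $\mathbf{C}(t)=x_0+t\,\mathbf{C}(t)x_0^{-1}T(\mathbf{C}(t))$ reads $C_0=x_0$, which holds. For each $n\ge 0$ the coefficient of $t^{n+1}$ on the left is $C_{n+1}$, and on the right it is $\sum_{k=0}^{n}C_k x_0^{-1}T(C_{n-k})$; these agree precisely by the first equation of~\eqref{eq:recursion C}. Hence the first identity in~\eqref{eq:C series-1} holds as an equality of formal power series. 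For the second identity I would argue that the two recursions in~\eqref{eq:recursion C} have identical left-hand sides $C_{n+1}$, so their right-hand sides agree for every $n$; translating each sum back into a Cauchy product exactly as above gives $T(\mathbf{C}(t))x_0^{-1}\mathbf{C}(t)=\mathbf{C}(t)x_0^{-1}T(\mathbf{C}(t))$ after dividing out the common factor $t$ (equivalently, comparing coefficients of $t^{n+1}$ for all $n\ge 0$, the coefficient of $t^0$ being trivially equal).

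This is genuinely routine, so there is no serious obstacle; the only point demanding a little care is noncommutativity, which forbids reordering factors. Concretely, I must keep $\mathbf{C}(t)$, $x_0^{-1}$, and $T(\mathbf{C}(t))$ in their given left-to-right order throughout and verify that the Cauchy-product expansion of a product of three series over the noncommutative coefficient ring $\ZZ F$ really does reproduce the sum $\sum_{k=0}^{n}C_k x_0^{-1}T(C_{n-k})$ with the correct ordering of $C_k$, $x_0^{-1}$, and $T(C_{n-k})$ — which it does, since the Cauchy product respects the ambient multiplication and only $t$ is central. I expect no further difficulty, which is consistent with the corollary being flagged in the text as "immediate."
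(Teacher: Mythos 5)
Your proof is correct and matches the paper's intent: the corollary is stated as ``immediate'' precisely because it follows from Proposition~\ref{pr:recursion C} by the coefficient-extraction argument you give, including the observation that the two recursions share the left-hand side $C_{n+1}$ (and that the $t^0$ coefficients $C_0x_0^{-1}T(C_0)=T(C_0)x_0^{-1}C_0=x_1$ agree), which yields the second identity.
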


\begin{remark} Applying $\varepsilon$ to \eqref{eq:C series-1}, we obtain the well-known functional equation $c(t)=1+tc(t)^2$ for the classical generating function 
$c(t)=\sum\limits_{n=0}^\infty \varepsilon(C_n)t^n$ of Catalan numbers.
\end{remark}

\begin{remark} 
\label{rem:DifK}
After the first version of this paper became available, Philippe Di Francesco and Rinat Kedem pointed to us that  ${\bf C}(t)x_0^{-1}$ is a {\it noncommutative Stieltjes continued fraction} which can be computed by combining methods of \cite[Section 3.3.1]{DiFK} and \cite[Section 8]{GCLLRT} as follows.
$${\bf C}(t)x_0^{-1}=\lim_{k\to \infty} {\bf S}(x_1x_0^{-1},\ldots,x_kx_{k-1}^{-1},t)\ ,
$$
where ${\bf S}(z_1,t)=(1-z_1t)^{-1}$, ${\bf S}(z_1,\ldots,z_k,t)={\bf S}(z_1,\ldots,z_{k-2},{\bf S}(z_k,t)z_{k-1},t)$ for $k\ge 2$.

\end{remark}

\begin{remark} 
\label{rem:recursion T^2}
In fact, there is another recursion
$$
C_{n+1}=C_nx_0^{-1}x_1 + \sum_{k=1}^n C_kx_1^{-1}T^2(C_{n-k})=x_1x_0^{-1}C_n + \sum_{k=0}^{n-1} T^2(C_k)x_1^{-1}C_{n-k}
$$ 
for $n\ge 1$. For instance,
$
C_3=C_2x_0^{-1}x_1+C_1x_1^{-1}T^2(C_1)+C_2x_1^{-1}T^2(C_0)=C_2x_0^{-1}x_1+x_3+C_2x_1^{-1}x_2$.
The recursion leads to the functional equation
${\bf C}(t)=x_0+t({\bf C}(t)x_0^{-1}x_1-x_0x_1^{-1}T^2({\bf C}(t))+{\bf C}(t)x_1^{-1}T^2({\bf C}(t)))$, 
which we leave as an exercise to the reader.

\end{remark}

\begin{remark} 
Equations  \eqref{eq:recursion C} can be written in a matrix form: $Hx_0^{-1}T(H)=T(H)x_0^{-1}H=H'$, 
where $H$ (resp. $H'$) is the lower triangular $\ZZ_{\ge 0}\times \ZZ_{\ge 0}$ Toeplitz matrix whose $(i,j)$-th entry is $C_{i-j}$ (resp. $C_{i-j+1}$) if $i\ge j$. Thus, $H^{-1}$ is a lower triangular  Toeplitz  matrix whose $(i,j)$-th entry is $-x_0^{-1}T(C_{i-j-1})x_0^{-1}$ for $i>j$.

\end{remark}

It turns out that there is a remarkable specialization $\underline C_n\in \ZZ F_1$ of $C_n$. Indeed,  
let $\sigma: \ZZ F\to \ZZ F_1$ be a ring homomorphism  given 
by $\sigma (x_k)=x_0^k x_1^k$, $k\in \ZZ_{\ge 0}$. Abbreviate $\underline C_n:=\sigma (C_n)$ for $n\ge 0$.

The following result asserts, in particular, that $\underline C_n$ are noncommutative polynomials (rather than Laurent polynomials) and they 
satisfy yet another noncommutative generalization of the well-known classical recursion for Catalan numbers.

\begin{proposition} 
\label{pr:recursion underline C}
The elements $\underline C_n\in \ZZ\langle x_0,x_1\rangle$ are determined by the following recursion: $\underline C_0=1$ and
\begin{equation}
\label{eq:recursion underline C}
\underline C_{n+1}=\sum_{k=0}^n \underline  C_kx_0\underline  C_{n-k}x_1=\sum_{k=0}^n x_0\underline  C_kx_1\underline  C_{n-k} \ ,
\end{equation}
for $n\ge 0$. In particular, all $\underline C_n$ belong to the free semi-ring 
$\ZZ_{\ge 0} \langle x_0,x_1\rangle\subset \ZZ_{\ge 0} F_1$.
\end{proposition}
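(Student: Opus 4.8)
The plan is to apply the ring homomorphism $\sigma$ to both recursions of Proposition~\ref{pr:recursion C} and read off the two desired identities. Two elementary observations drive everything. First, $\sigma(x_0)=x_0^0x_1^0=1$, hence $\sigma(x_0^{-1})=1$ as well, so the inverse appearing in \eqref{eq:recursion C} simply disappears under $\sigma$. Second, and crucially, I claim the commutation relation
\begin{equation}
\label{eq:key sigma T}
\sigma(T(C_m))=x_0\,\underline C_m\,x_1\qquad(m\ge 0).
\end{equation}
Granting \eqref{eq:key sigma T}, applying $\sigma$ to $C_{n+1}=\sum_{k=0}^n C_kx_0^{-1}T(C_{n-k})$ gives $\underline C_{n+1}=\sum_{k=0}^n \underline C_k\,\sigma(x_0^{-1})\,\sigma(T(C_{n-k}))=\sum_{k=0}^n \underline C_k x_0\underline C_{n-k}x_1$, while applying $\sigma$ to the mirror recursion $C_{n+1}=\sum_{k=0}^n T(C_k)x_0^{-1}C_{n-k}$ gives $\underline C_{n+1}=\sum_{k=0}^n x_0\underline C_kx_1\underline C_{n-k}$; together with $\underline C_0=\sigma(C_0)=\sigma(x_0)=1$ this is exactly \eqref{eq:recursion underline C}.

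The heart of the argument is therefore \eqref{eq:key sigma T}, and this is where I expect the main obstacle to lie, because $\sigma\circ T$ does \emph{not} equal $w\mapsto x_0\sigma(w)x_1$ on all of $\ZZ F$ (it already fails on $w=x_jx_k$); the identity holds only because each monomial $M_P$ has a rigid sign pattern. I would prove \eqref{eq:key sigma T} monomial by monomial, establishing $\sigma(T(M_P))=x_0\,\sigma(M_P)\,x_1$ for every $P\in\PP_n$ and then summing over $P$. The two generator-level relations $\sigma(x_{k+1})=x_0\,\sigma(x_k)\,x_1$ and $\sigma(x_{k+1}^{-1})=x_1^{-1}\,\sigma(x_k^{-1})\,x_0^{-1}$ are immediate from $\sigma(x_k)=x_0^kx_1^k$. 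The key structural fact about Catalan paths is that their corners strictly alternate between southeast and northwest, with the first and last corner both southeast: the path must leave $(0,0)$ eastward and enter $(n,n)$ from the south to keep all contents nonnegative, so $M_P$ is an alternating word $x_{a_0}x_{b_1}^{-1}x_{a_1}\cdots x_{b_r}^{-1}x_{a_r}$ beginning and ending with a positive letter. Feeding $T(M_P)$ through the two generator relations and multiplying out, each interior junction is either $(\cdots x_1)(x_1^{-1}\cdots)$ (a $+$ factor followed by a $-$ factor) or $(\cdots x_0^{-1})(x_0\cdots)$ (a $-$ factor followed by a $+$ factor), so every boundary letter introduced by $T$ telescopes away in pairs, leaving only the initial $x_0$ and the final $x_1$; this yields $\sigma(T(M_P))=x_0\,\sigma(M_P)\,x_1$, and \eqref{eq:key sigma T} follows by summation. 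The degenerate cases—the single-corner path with $M_P=x_n$ and the base value $C_0=x_0$—are checked directly.

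Finally, positivity and polynomiality follow by a routine induction on $n$ from the recursion just established: $\underline C_0=1\in\ZZ_{\ge0}\langle x_0,x_1\rangle$, and if $\underline C_0,\dots,\underline C_n$ are noncommutative polynomials with nonnegative integer coefficients then so is $\underline C_{n+1}=\sum_{k=0}^n\underline C_kx_0\underline C_{n-k}x_1$, since the right-hand side involves no inverses and only nonnegative coefficients. Thus all $\underline C_n$ lie in $\ZZ_{\ge0}\langle x_0,x_1\rangle\subset\ZZ_{\ge0}F_2$, as claimed.
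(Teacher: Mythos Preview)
Your proof is correct and follows essentially the same approach as the paper: both hinge on the identity $\sigma(T(w))=x_0\,\sigma(w)\,x_1$ for alternating words $w=x_{i_1}x_{i_2}^{-1}\cdots x_{i_s}$, proved by the same telescoping computation, and then apply $\sigma$ to the recursion~\eqref{eq:recursion C}. The only cosmetic difference is that the paper abstracts this into a lemma for all alternating elements of $\ZZ F$, whereas you argue directly for the path monomials $M_P$; the content is the same.
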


Our proof of the proposition is based on the identity $\sigma(T^iC_n)=x_0^i\sigma(C_n)x_1^i$ for $i,n\ge 0$ (see Lemma \ref{le:sigmaT}).

\begin{remark} Applying $\sigma$ to the recursions from Remark \ref{rem:recursion T^2} and using the same argument from the proof of Proposition \ref{pr:recursion underline C}, we obtain another recursion for $\underline C_n$:
$$
\underline C_{n+1}=\underline C_nx_0 x_1 + \sum_{k=1}^n \underline C_kx_1^{-1}x_0\underline C_{n-k} x_1^2=x_0x_1C_n + \sum_{k=0}^{n-1} x_0^2\underline  C_kx_1 x_0^{-1}\underline  C_{n-k}\ . 
$$

\end{remark}

\begin{remark} One can show that the ``two-variable''noncommutative Catalan numbers are invariant under the anti-involution of $\ZZ F_1$ interchanging $x_0$ and $x_1$.

\end{remark}

In fact, we can explicitly compute each $\underline C_n$. Indeed, assign  a monomial $\underline M_P\in F_1$ to each $P\in \PP_n$ by:
$$\underline M_P=x_0^{j_0}x_1^{j_1}x_0^{j_2}\cdots x_1^{j_{2k}}\ ,$$
where $(j_0,j_1,\ldots,j_{2k})\in \ZZ_{> 0}^{2k+1}$ is the sequence of jumps of the path $P$, i.e., the $r$-th northwest corner is $(j_0+j_2+\cdots+j_{2r},j_1+j_3+\cdots +j_{2r+1})$ and $r$-th southeast corner of $P$ is 
$(j_0+j_2+\cdots +j_{2r},j_1+j_3+\cdots+j_{2r-1})$
One can easily see that $\sigma(M_P)=\underline M_P$, so we obtain the following immediate corollary.

\begin{corollary} $\underline C_n=\sum\limits_{P\in \PP_n} \underline M_P$ for all $n\ge 1$.

\end{corollary}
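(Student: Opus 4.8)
The plan is to reduce the Corollary to the per-path identity $\sigma(M_P)=\underline M_P$ asserted just before its statement. Since $\sigma:\ZZ F\to \ZZ F_2$ is a ring homomorphism and $C_n=\sum_{P\in\PP_n}M_P$ by \eqref{eq: main def}, applying $\sigma$ and using additivity immediately gives $\underline C_n=\sigma(C_n)=\sum_{P\in\PP_n}\sigma(M_P)$; hence everything comes down to verifying $\sigma(M_P)=\underline M_P$ for a single fixed Catalan path $P$.

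First I would write $M_P$ out explicitly. Listing the corners of $P$ in their natural order and recording $\sgn=+1$ on southeast corners and $\sgn=-1$ on northwest corners, one has $M_P=x_{a_0}x_{b_0}^{-1}x_{a_1}x_{b_1}^{-1}\cdots x_{a_{m-1}}$, where $a_r:=c(\mathrm{SE}_r)$ and $b_r:=c(\mathrm{NW}_r)$ are the contents of the successive southeast and northwest corners; the Catalan condition forces $P$ to begin with a horizontal run and end with a vertical one, so this word both begins and ends with a southeast corner. Next I would apply $\sigma$ via $\sigma(x_k)=x_0^kx_1^k$ and $\sigma(x_k^{-1})=x_1^{-k}x_0^{-k}$. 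The key observation is that $\sigma(x_{a_r})$ ends in a power of $x_1$ while $\sigma(x_{b_r}^{-1})$ begins in a power of $x_1$, and $\sigma(x_{b_r}^{-1})$ ends in a power of $x_0$ while the next factor $\sigma(x_{a_{r+1}})$ begins in a power of $x_0$. Consequently all the interior powers coalesce and the product telescopes into
$$\sigma(M_P)=x_0^{a_0}\,x_1^{a_0-b_0}\,x_0^{a_1-b_0}\,x_1^{a_1-b_1}\,x_0^{a_2-b_1}\cdots x_0^{a_{m-1}-b_{m-2}}\,x_1^{a_{m-1}}.$$

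It then remains to match each exponent with a jump of $P$. Using the explicit coordinate formulas for $\mathrm{SE}_r$ and $\mathrm{NW}_r$, a direct subtraction gives $a_0=j_0$, $a_r-b_{r-1}=j_{2r}$, and $a_r-b_r=j_{2r+1}$, so that $\sigma(M_P)=x_0^{j_0}x_1^{j_1}x_0^{j_2}\cdots=\underline M_P$; summing over $P\in\PP_n$ then yields the Corollary. I expect the only genuine obstacle to be the treatment of the two boundary factors, which lie outside the interior telescoping: identifying the final exponent $a_{m-1}=c(\mathrm{SE}_{m-1})$ with the last vertical jump requires invoking the constraint that the total horizontal and total vertical displacements of $P$ each equal $n$. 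One should also record that the Catalan condition $c(p)\ge 0$ together with positivity of the jumps guarantees that every surviving exponent is a positive integer, so that $\sigma(M_P)$ is an honest monomial in $\ZZ_{\ge 0}\langle x_0,x_1\rangle$ rather than merely a Laurent monomial.
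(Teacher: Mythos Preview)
Your proposal is correct and follows exactly the paper's approach: the paper states just before the Corollary that ``One can easily see that $\sigma(M_P)=\underline M_P$,'' and deduces the Corollary as immediate from this together with \eqref{eq: main def} and the definition $\underline C_n=\sigma(C_n)$. Your argument simply supplies the details of this ``easily seen'' per-path identity via the telescoping computation, which is precisely what the paper leaves to the reader.
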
 

\begin{example} 
$\underline C_2= x_0^2x_1^2 + x_0x_1x_0x_1$,
$\underline C_3 = x_0^3x_1^3 +  x_0^2x_1x_0x_1^2 + x_0^2x_1^2x_0x_1 + x_0x_1x_0^2x_1^2 + x_0x_1x_0x_1x_0x_1$,
$$
\underline C_4=x_0^4x_1^4+x_0^3x_1x_0x_1^3+x_0^2x_1^2x_0^2x_1^2+x_0^3x_1^2x_0x_1^2+x_0^2x_1x_0^2x_1^3+x_0^3x_1^3x_0x_1+x_0x_1x_0^3x_1^3
+x_0^2x_1x_0x_1x_0x_1^2$$
$$+x_0x_1x_0^2x_1^2x_0x_1+x_0^2x_1x_0x_1^2x_0x_1+x_0x_1x_0^2x_1x_0x_1^2
+x_0^2x_1^2x_0x_1x_0x_1+x_0x_1x_0x_1x_0^2x_1^2+x_0x_1x_0x_1x_0x_1x_0x_1\ .
$$
\end{example}


The following immediate result is a ``two-variable" version of Corollary \ref{cor:C series}.

\begin{corollary} The formal power series $\underline{\bf C}(t)=\sum\limits_{n=0}^\infty \underline C_n t^n\in \ZZ \langle x_0,x_1\rangle[[t]]$ satisfies:
\begin{equation}
\label{eq:C series x0 x1}
\underline {\bf C}(t)=1+t\underline {\bf C}(t)x_0\underline {\bf C}(t)x_1\ .
\end{equation}
\end{corollary}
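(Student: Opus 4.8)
The plan is to derive the functional equation \eqref{eq:C series x0 x1} directly from the recursion \eqref{eq:recursion underline C} established in Proposition \ref{pr:recursion underline C}, which is already available to us. This is a standard generating-function bookkeeping argument, and the only real content is to recognize that the right-hand side of the recursion is exactly the coefficient of $t^{n+1}$ in a Cauchy product.

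First I would write out the definition $\underline{\bf C}(t)=\sum_{n\ge 0}\underline C_n t^n$ and compute the product $\underline{\bf C}(t)\,x_0\,\underline{\bf C}(t)\,x_1$ by expanding both factors as power series in $t$. Collecting terms, the coefficient of $t^n$ in $\underline{\bf C}(t)x_0\underline{\bf C}(t)x_1$ is $\sum_{k=0}^n \underline C_k\,x_0\,\underline C_{n-k}\,x_1$, since the insertions of $x_0$ and $x_1$ are constant (independent of $t$) and simply sit between and after the two series factors. By the first equality in \eqref{eq:recursion underline C}, this sum is precisely $\underline C_{n+1}$. Hence the coefficient of $t^{n+1}$ in $t\,\underline{\bf C}(t)x_0\underline{\bf C}(t)x_1$ equals $\underline C_{n+1}$ for every $n\ge 0$.

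Next I would account for the constant term. On the left side of \eqref{eq:C series x0 x1} the coefficient of $t^0$ is $\underline C_0=1$ (by Proposition \ref{pr:recursion underline C}), and on the right side the term $1$ contributes exactly this, while $t\,\underline{\bf C}(t)x_0\underline{\bf C}(t)x_1$ has no constant term. Thus the constant terms match, and for each $n\ge 0$ the coefficients of $t^{n+1}$ match by the previous paragraph. Comparing coefficients of $t^m$ for all $m\ge 0$ shows that $\underline{\bf C}(t)=1+t\,\underline{\bf C}(t)x_0\underline{\bf C}(t)x_1$ as elements of $\ZZ\langle x_0,x_1\rangle[[t]]$, which is the claim.

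There is essentially no obstacle here: the statement is labelled ``immediate,'' and the work amounts to matching coefficients in a formal power series identity over a noncommutative ring. The one point requiring mild care is that the ring $\ZZ\langle x_0,x_1\rangle$ is noncommutative, so one must preserve the left-to-right order of the factors $\underline C_k$, $x_0$, $\underline C_{n-k}$, $x_1$ throughout; but since the recursion \eqref{eq:recursion underline C} is stated with exactly this ordering, the Cauchy-product computation reproduces it verbatim, and no reordering is ever needed. This is why the result follows immediately from Proposition \ref{pr:recursion underline C}.
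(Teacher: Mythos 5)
Your proof is correct and matches the intended argument: the paper states this corollary as immediate from the recursion \eqref{eq:recursion underline C}, and your coefficient-by-coefficient comparison of the Cauchy product (with $t$ central and the noncommutative factors kept in order) is exactly that derivation. No gaps.
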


\begin{remark} For $t=1$, the equation \eqref{eq:C series x0 x1} coincides with the quadratic equation on formal series $K(x_0,x_1)$ studied in \cite{PPR} where a solution of this equation
was presented as a ``noncommutative Rogers-Ramanujan continued fraction".

\end{remark} 

\begin{remark} In our previous work \cite{BRRZ} on the inversion of $\sum\limits_{n\geq 0} x_0^n x_1^n$ in the ring of formal series $\ZZ\langle \langle x_0,x_1\rangle\rangle$ in noncommutative variables $x_0,x_1$ we encountered a quadratic equation $D=1-Dx_0x_1+Dx_0Dx_1$ for some 
$D\in \ZZ\langle \langle x_0,x_1\rangle\rangle$
and noticed that it is very similar to \eqref{eq:C series x0 x1}. This was the starting point of the project.
\end{remark}

\begin{remark} In fact, there is another group homomorphism $\pi: F\rightarrow F_1$ given by 
$\pi(x_k)=x_0\cdot (x_0^{-1}x_1)^k$, $k\in \ZZ_{\ge 0}$, which results in an ``almost commutative'' specialization of noncommutative Catalan numbers: 
$\pi(C_n)=\pi(x_n)\cdot \frac{1}{n+1}\binom{2n}{n}$.

\end{remark}





For each $0\le k\le n$  denote by $\mathcal \PP_n^k$,
the set of all $P\in \PP_n$ such that the rightmost southeast corner $p$ of $P$ satisfies $p=(n,y)$, where $y\le k$. In particular, 
$\PP_n^{n-1}=\PP_n^n=\PP_n$. For each $0\le k\le n$ define {\it truncated noncommutative Catalan number}  $C_n^k\in \ZZ F_n$  by
$$C_n^k:=\sum_{P\in \PP_n^k} M_P\ .$$

The following recursion on $C_n^k$ is immediate.

\begin{lemma} $C_n^k=C_n^{k-1}+C_{n-1}^kx_{n-k-1}^{-1}x_{n-k}$ for all $1\le k\le n$ (with the convention $C_n^{\ell}=0$ if $\ell>n$). 

\end{lemma}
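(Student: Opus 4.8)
The plan is to give a bijective proof, matching $C_n^k-C_n^{k-1}$ term by term against $C_{n-1}^k x_{n-k-1}^{-1}x_{n-k}$. First I would record the structure of a Catalan path near the right boundary. Since a path in $\PP_n$ ends at $(n,n)$ and must keep content $\ge 0$, its last step is vertical, so its last (equivalently rightmost) southeast corner always lies on the line $x=n$; moreover the path reaches this line exactly once, via a single horizontal run $(a,k)\to(n,k)$ followed by the final vertical run $(n,k)\to(n,n)$. Consequently $\PP_n^k\setminus\PP_n^{k-1}$ is precisely the set of $P\in\PP_n$ whose rightmost southeast corner is $(n,k)$, so that $C_n^k-C_n^{k-1}=\sum_P M_P$ over exactly these paths. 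For $k\ge 1$ the content constraint $a-k\ge 0$ forces $a\ge k\ge 1$, so this horizontal run is preceded by a northwest corner at $(a,k)$.

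Next I would define the map $\phi$ sending such a $P$ to the path $Q\in\PP_{n-1}$ obtained by truncating $P$ at the point $(n-1,k)$ (which lies on the horizontal run, since $a\le n-1$) and appending the vertical run $(n-1,k)\to(n-1,n-1)$; the bound $k\le n-1$ guarantees this is a legal Catalan path ending at $(n-1,n-1)$. The key computation is that $M_P=M_Q\cdot x_{n-k-1}^{-1}x_{n-k}$, and here I expect the main subtlety, namely a short case analysis on the length $L=n-a$ of the horizontal run. If $L\ge 2$, the northwest corner of $P$ at $(a,k)$ survives in $Q$ and a new southeast corner appears at $(n-1,k)$, contributing $x_{(n-1)-k}$; comparing with the southeast corner of $P$ at $(n,k)$, contributing $x_{n-k}$, gives the claim, and $Q$ has its rightmost southeast corner at $(n-1,k)$. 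If $L=1$, the northwest corner of $P$ at $(n-1,k)$ disappears in $Q$ (the vertical run simply continues upward) and no new corner is created; the factor $x_{(n-1)-k}^{-1}$ that thereby drops out of $M_P$ is exactly accounted for, again yielding $M_P=M_Q\,x_{n-k-1}^{-1}x_{n-k}$, while now $Q$ has its rightmost southeast corner at some $(n-1,c)$ with $c<k$.

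Finally I would check that $\phi$ is a bijection onto all of $\PP_{n-1}^k$. The two cases above show that its image meets $\PP_{n-1}^k\setminus\PP_{n-1}^{k-1}$ (when $L\ge 2$) and $\PP_{n-1}^{k-1}$ (when $L=1$); I would make surjectivity and injectivity precise by exhibiting the inverse $\psi$, which takes $Q\in\PP_{n-1}^k$, locates the height-$k$ point on its final vertical run (available since the rightmost southeast corner height $y_Q\le k\le n-1$), and inserts a single rightward step there, routing the path through $(n,k)$ before closing up at $(n,n)$. One verifies that $\psi(Q)\in\PP_n$ has rightmost southeast corner $(n,k)$ and that $\phi,\psi$ are mutually inverse. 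Summing $M_P=M_Q\,x_{n-k-1}^{-1}x_{n-k}$ over the bijection then gives $C_n^k-C_n^{k-1}=\big(\sum_{Q\in\PP_{n-1}^k}M_Q\big)x_{n-k-1}^{-1}x_{n-k}=C_{n-1}^k\,x_{n-k-1}^{-1}x_{n-k}$, as claimed. The boundary case $k=n$ is handled separately by the convention $C_{n-1}^n=0$, since no path in $\PP_n$ has its rightmost southeast corner at $(n,n)$.
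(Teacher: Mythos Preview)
Your proof is correct and is exactly the natural bijection behind the lemma; the paper itself declares the recursion ``immediate'' and gives no argument. In fact, the paper later proves the equivalent statement $\tilde C_n^k=\tilde C_{n-1}^k+\tilde C_n^{k-1}y_{n+1-k}$ (Proposition~\ref{pr:recursion Cnk}(b)) by the same bijective decomposition, only phrased in the coordinates $\jj(P)=(j_1(P),\ldots,j_k(P))$ of Lemma~\ref{le:bijection paths sequences}: splitting ${\bf J}_n^k={\bf J}_{n-1}^k\sqcup ({\bf J}_n^{k-1},n)$ according to whether $j_k<n$ or $j_k=n$ corresponds precisely to your two cases $L\ge 2$ and $L=1$, and the passage from $\jj\in{\bf J}_{n-1}^k$ to the associated path in $\PP_{n-1}^k$ is your map $\phi$.
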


\begin{example} $C_n^0=x_n$, $C_n^{n-1}=C_n^n=C_n$ for all $n\ge 1$. Also, $C_n^1=x_n+\sum\limits_{i=1}^{n-1} x_ix_{i-1}^{-1}x_{n-1}$, 
$$
C_n^2=\sum_{1\le i\le j\le n,j>1}x_ix_{i-1}^{-1}x_{j-1}x_{j-2}^{-1}x_{n-2}\ .
$$
\end{example}
%
%
%
%

Sometimes it is  convenient to express $C_n^k$  via
$y_i:=x_ix_{i-1}^{-1}$, $i\in \ZZ_{\ge 1}$. Indeed,
denote $\tilde C_n^k:=C_n^kx_{n-k}^{-1}$ for $k,n\in \ZZ_{\ge 0}$, $k\le n$.

The following result generalizes a number of basic properties of truncated Catalan numbers. 

\begin{proposition} 
\label{pr:recursion Cnk} For all $0\le k\le n$ one has:
  
(a) 
  $\tilde C_n^k=\sum\limits_{j_1\le \ldots \le j_k\le n:\\ j_1\ge 1,\ldots,j_k\ge k}
  y_{j_1}y_{j_2-1}\dots y_{j_k-k+1}$.

(b) $\tilde C_n^k=\tilde C_{n-1}^k+\tilde C_n^{k-1}y_{n+1-k}$  (with the convention $\tilde C_n^{\ell}=0$ if $\ell>n$). 

(c) $\tilde C_{n+1}^k=\sum\limits_{i=0}^k \tilde C_i^iT(\tilde C_{n-i}^{k-i})$.

\end{proposition}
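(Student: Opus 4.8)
The plan is to prove the three parts in the order (b), (a), (c): part (b) is an immediate consequence of the preceding Lemma, part (a) then follows from (b) by induction, and part (c) becomes a weight-preserving bijection among the $y$-monomials produced by (a).

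First I would derive (b) algebraically. Starting from the preceding Lemma $C_n^k=C_n^{k-1}+C_{n-1}^kx_{n-k-1}^{-1}x_{n-k}$ and multiplying on the right by $x_{n-k}^{-1}$, the cancellation $x_{n-k}x_{n-k}^{-1}=1$ gives
\[
\tilde C_n^k=C_n^kx_{n-k}^{-1}=C_n^{k-1}x_{n-k}^{-1}+C_{n-1}^kx_{n-k-1}^{-1}.
\]
The second summand is exactly $\tilde C_{n-1}^k$, since $(n-1)-k=n-k-1$. The first equals $C_n^{k-1}x_{n-k+1}^{-1}\cdot x_{n-k+1}x_{n-k}^{-1}=\tilde C_n^{k-1}y_{n-k+1}$, because $\tilde C_n^{k-1}=C_n^{k-1}x_{n-k+1}^{-1}$ and $y_{n-k+1}=x_{n-k+1}x_{n-k}^{-1}$. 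This is (b); the relevant boundary data is $\tilde C_n^0=x_nx_n^{-1}=1$ and $\tilde C_n^k=0$ for $k>n$.

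For (a) I would check that the claimed right-hand side $A_n^k$ satisfies the same recursion and boundary data, then invoke uniqueness. One has $A_n^0=1$, and $A_n^k=0$ for $k>n$ since $j_k\ge k>n\ge j_k$ is impossible. Splitting the defining sum of $A_n^k$ by whether $j_k\le n-1$ or $j_k=n$: the terms with $j_k\le n-1$ assemble into $A_{n-1}^k$, while the terms with $j_k=n$ have last letter $y_{j_k-k+1}=y_{n-k+1}$ and the prefix $(j_1,\dots,j_{k-1})$ ranges exactly over the index set of $A_n^{k-1}$, so they sum to $A_n^{k-1}y_{n-k+1}$. Hence $A_n^k=A_n^{k-1}y_{n-k+1}+A_{n-1}^k$, which is recursion (b). Since (b) together with the boundary values determines all $\tilde C_n^k$ by induction on $n+k$ (both reduced terms have $n+k$ smaller by one), we conclude $\tilde C_n^k=A_n^k$. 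It is worth recording what (a) says after the substitution $\ell_r:=j_r-r+1$: $\tilde C_n^k$ is the sum of all words $y_{\ell_1}\cdots y_{\ell_k}$ with $\ell_r\ge 1$, $\ell_{r+1}\ge\ell_r-1$ (each index drops by at most one), and $\ell_k\le n-k+1$ (the remaining upper bounds $\ell_r\le n-r+1$ being automatic). In this form $\tilde C_i^i$ is precisely the sum of such length-$i$ words \emph{ending in} $y_1$, and, since $T(y_j)=y_{j+1}$, the polynomial $T(\tilde C_{n-i}^{k-i})$ is the sum of length-$(k-i)$ words all of whose indices are $\ge 2$ and whose last index is $\le n-k+2$.

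Finally, for (c) I would set up a bijection on words. Given a word $y_{\ell_1}\cdots y_{\ell_k}$ contributing to $\tilde C_{n+1}^k$ (so $\ell_r\ge 1$, drops $\le 1$, and $\ell_k\le n-k+2$), let $i$ be the largest position with $\ell_i=1$, and $i=0$ if no index equals $1$. The prefix $y_{\ell_1}\cdots y_{\ell_i}$ ends in $y_1$, hence is a term of $\tilde C_i^i$; the suffix $y_{\ell_{i+1}}\cdots y_{\ell_k}$ has all indices $\ge 2$ with last index $\le n-k+2$, hence is a term of $T(\tilde C_{n-i}^{k-i})$; and the break between them is an index-increase, so it respects the drop-by-at-most-one condition. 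Conversely, concatenating any term of $\tilde C_i^i$ with any term of $T(\tilde C_{n-i}^{k-i})$ yields a word counted by $\tilde C_{n+1}^k$ whose final $y_1$ sits exactly in position $i$. Summing over $i=0,\dots,k$ gives (c); geometrically the cut at the final $y_1$ is the last return of the underlying Catalan path to the diagonal. The hard part will be precisely this last step: verifying that "ends in $y_1$'' characterizes $\tilde C_i^i$ and "all indices $\ge 2$'' characterizes the image under $T$, and that the inequality constraints ($\ell_r\ge 1$ versus $\ge 2$, together with the upper bound $\le n-k+2$) distribute correctly across the cut, including the degenerate cases $i=0$ and $i=k$.
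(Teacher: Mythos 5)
Your proof is correct, but its logical architecture differs from the paper's in parts (a) and (b). The paper proves (a) first and directly: it sets up a bijection $P\mapsto \jj(P)$ between the paths $\PP_n^k$ and the index set ${\bf J}_n^k=\{(j_1,\ldots,j_k): j_1\le\cdots\le j_k\le n,\ j_s\ge s\}$ (recording for each height $s$ the least $x$-coordinate of $P$ at that height) and checks $M_Px_{n-k}^{-1}=y_{\jj(P)}$; parts (b) and (c) are then read off from decompositions of these index sets, namely ${\bf J}_n^k={\bf J}_{n-1}^k\sqcup({\bf J}_n^{k-1},n)$ for (b) and, for (c), the splitting of $\jj\in{\bf J}_{n+1}^k$ according to the largest $i$ with $j_i=i$. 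You instead obtain (b) purely algebraically from the stated Lemma on $C_n^k$ (a clean one-line manipulation the paper does not spell out) and recover (a) by uniqueness of the solution of that recursion with the boundary data $\tilde C_n^0=1$, $\tilde C_n^\ell=0$ for $\ell>n$ — a legitimate shortcut that avoids re-deriving the path-to-monomial dictionary, at the cost of making (a) a verification rather than an explanation of where the monomials come from. Your proof of (c) is, after the substitution $\ell_r=j_r-r+1$, the same decomposition as the paper's: your cut at the last letter equal to $y_1$ in position $i$ is exactly the paper's largest $i$ with $j_i=i$, and your explicit checks that the inequality constraints distribute across the cut (the junction inequality $\ell_{i+1}\ge\ell_i-1$ being automatic, the degenerate cases $i=0$ and $i=k$) are precisely the content the paper leaves implicit in its product decomposition ${\bf J}_{n+1}^k=\bigsqcup_{i=0}^k{\bf J}_i^i\times T^{i+1}({\bf J}_{n-i}^{k-i})$; these checks all go through as you describe.
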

 
A proof follows from Lemmas \ref{le:bijection paths sequences}, \ref{le:Jdecomposition}.   

\begin{example} $\tilde C_n^0=1$, $\tilde C_n^1=y_1+\cdots+ y_n$, and $\tilde C_n^n=\tilde C_n^{n-1}y_1$ for all $n\ge 1$. 
$$
\tilde C_n^2=\sum_{1\le i\le j\le n,j> 1}y_iy_{j-1},~
\tilde C_n^3=\sum _{1\leq i\leq j \leq k\leq n,\ j>1,\ k>2}
y_iy_{j-1}y_{k-2} \ .
$$


\end{example}

However, the following recursion is rather non-trivial (and we could not find its classical analogue in the literature).

\begin{theorem} 
\label{th:catalan via truncated catalans}
$C_n=\sum\limits_{\substack{a,b\in \ZZ_{\ge 0}:\\a+b\leq n, a-b=d}}
C_{n-b}^a x_{n-a-b}^{-1}\overline {C_{n-a}^b}$
for each $n\in \ZZ_{\ge 0}$ and each $d\in \ZZ$ with $|d|\le n$ (e.g., the right hand side does not depend on $d$).
 
\end{theorem}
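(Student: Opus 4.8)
The plan is to prove the identity bijectively for each fixed $d$; the claimed independence of the right-hand side on $d$ then becomes automatic, since for every admissible $d$ both sides will be shown to equal $C_n=\sum_{P\in \PP_n}M_P$. The single geometric idea is to cut each Catalan path along the anti-diagonal determined by $d$.

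\smallskip

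Fix $d$ with $|d|\le n$. For each $P\in \PP_n$ let $p=(p_1,p_2)\in P$ be the \emph{unique} point of $P$ lying on the anti-diagonal $p_1+p_2=n+d$ (it is unique because $p_1+p_2$ increases by $1$ at every step, running from $0$ to $2n$). Put $a:=p_2$, $b:=n-p_1$, and $v:=c(p)=p_1-p_2=n-a-b$. Then $a,b\ge 0$, $a+b=n-v\le n$, and $a-b=(p_1+p_2)-n=d$ hold automatically, with $v\ge 0$ since $P$ is Catalan. Split $P$ at $p$ into a left path $P'$ from $(0,0)$ to $(n-b,a)$ and a right path $P''$ from $(n-b,a)$ to $(n,n)$, both staying weakly below the diagonal. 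Completing $P'$ by the vertical segment from $(n-b,a)$ up to $(n-b,n-b)$ yields a path $D_1\in \PP_{n-b}$ whose rightmost southeast corner has content $\ge v=(n-b)-a$, i.e. $D_1\in \PP_{n-b}^{a}$; cutting $D_1$ along its last column at height $a$ inverts this, so $P'\mapsto D_1$ is a bijection onto $\PP_{n-b}^{a}$.

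\smallskip

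For the right piece I would use the content-preserving reflection $\rho\colon(p_1,p_2)\mapsto(n-p_2,n-p_1)$, which carries $\PP_n$ to itself, reverses the order of corners while preserving their type and content, and therefore realizes the bar (this is the mechanism behind Proposition~\ref{pr:bar invariance}: $M_{\rho P}=\overline{M_P}$). Under $\rho$ the piece $P''$ becomes a path to $(n-a,b)$, which completes exactly as above to a unique $D_2\in \PP_{n-a}^{b}$, and its contribution to $M_P$ is $\overline{M_{D_2}}$. Letting the cut point run over the anti-diagonal, the assignment $(D_1,D_2)\mapsto P$ is a bijection from $\bigsqcup_{a-b=d}\PP_{n-b}^{a}\times \PP_{n-a}^{b}$ onto $\PP_n$, its inverse being ``locate $p$, split, and complete''.

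\smallskip

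The heart of the argument is the weight identity $M_P=M_{D_1}\,x_{v}^{-1}\,\overline{M_{D_2}}$, which I would verify by inspecting the two steps of $P$ incident to $p$. The completions introduce a spurious corner of content exactly $v$ at the junction precisely when $P$ enters $p$ by a horizontal step (adding a trailing $x_v$ to $M_{D_1}$) and/or leaves $p$ by a vertical step (adding a leading $x_v$ to $\overline{M_{D_2}}$); meanwhile $p$ itself contributes $x_v$ to $M_P$ when it is a southeast corner (horizontal in, vertical out), contributes $x_v^{-1}$ when it is a northwest corner (vertical in, horizontal out), and contributes nothing when $P$ passes straight through. Checking the four cases shows that in every case the inserted factor $x_v^{-1}$ cancels exactly the spurious $x_v$'s and returns the reduced word $M_P$. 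Summing $M_P=M_{D_1}x_{v}^{-1}\overline{M_{D_2}}$ over the bijection then gives $C_n=\sum_{a-b=d}C_{n-b}^{a}\,x_{n-a-b}^{-1}\,\overline{C_{n-a}^{b}}$ for every admissible $d$. I expect the bookkeeping of these cancellations, together with making the reflection identification of $P''$ with $\overline{M_{D_2}}$ fully precise, to be the only delicate point; the extreme values $d=\pm n$, where the sum collapses to the single term $C_n$ (using $C_n^{n}=C_n$, $C_0^{0}=x_0$, and Proposition~\ref{pr:bar invariance}), provide a convenient consistency check.
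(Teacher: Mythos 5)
Your proposal is correct and follows essentially the same route as the paper: both proofs cut each Catalan path at its unique point on the anti-diagonal $x+y=n+d$, identify the left piece with $\PP_{n-b}^a$ and the right piece with $\PP_{n-a}^b$ via the reflection realizing $\overline{\cdot}$, and sum the resulting factorization of weights. The only difference is bookkeeping: the paper first passes to the sequence encoding $P\mapsto\jj(P)$ and the monomials $y_{\jj}$, for which the splitting $y_{\jj}=y_{\jj'}y_{\jj''}$ is automatic, whereas you work with $M_P$ directly and must verify the four-case cancellation at the junction --- a check that does go through.
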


A proof is given by Lemmas \ref{le:C_n^k}--\ref{le:J(a,d)} in Section \ref{subsect:3.1}.

\begin{remark}  In particular, Theorem \ref{th:catalan via truncated catalans} provides another confirmation $\overline{\cdot}$\,-invariance of noncommutative Catalan numbers (established in Proposition \ref{pr:bar invariance}).
\end{remark}

It turns out that the above ``two-variable specialization'' $\sigma$ is also of interest for truncated noncommutative Catalan numbers. Indeed, in the notation as above, denote 
$\underline C_n^k:=\sigma(C_n^k)$ and $\doubleunderline C_n^k:=\underline C_n^k x_1^{k-n}$.

The following is immediate.

\begin{corollary} In the notation of Proposition \ref{pr:recursion underline C}, one has

(a) $\underline C_n^k=\sum\limits_{P\in \PP_n^k} \underline M_P$ for all  $k,n\in \ZZ_{\ge 0}$, $k\le n$.

(b) $\doubleunderline C_n^k=\doubleunderline C_n^{k-1}x_1+\doubleunderline C_{n-1}^kx_0$ for all $1\le k\le n$ (with the convention $\doubleunderline C_n^{\ell}=0$ if $\ell>n$). In particular, each $\doubleunderline C_n^k$ is a noncommutative polynomial in $x_0,x_1$ of degree $n+k$. 

\end{corollary}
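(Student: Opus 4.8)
The plan is to derive both parts purely formally, by pushing the already-established facts about the $C_n^k$ through the homomorphism $\sigma$; essentially no new combinatorics is needed.

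For part (a) I would simply apply $\sigma$ to the defining equation $C_n^k=\sum_{P\in\PP_n^k}M_P$. Since $\sigma$ is a ring homomorphism it is in particular additive, so $\underline C_n^k=\sigma(C_n^k)=\sum_{P\in\PP_n^k}\sigma(M_P)$, and the identity $\sigma(M_P)=\underline M_P$ recorded earlier finishes it at once.

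For part (b) the idea is to apply $\sigma$ to the recursion $C_n^k=C_n^{k-1}+C_{n-1}^k x_{n-k-1}^{-1}x_{n-k}$ and then twist by a power of $x_1$. First I would record $\sigma(x_{n-k-1}^{-1}x_{n-k})=x_1^{k+1-n}x_0x_1^{n-k}$, using $\sigma(x_j)=x_0^j x_1^j$ and inverting $\sigma(x_{n-k-1})$ in the free group $F_2$. This gives $\underline C_n^k=\underline C_n^{k-1}+\underline C_{n-1}^k x_1^{k+1-n}x_0x_1^{n-k}$. Right-multiplying throughout by $x_1^{k-n}$ and invoking $\doubleunderline C_n^k=\underline C_n^k x_1^{k-n}$, the powers of $x_1$ collapse: the first term becomes $\doubleunderline C_n^{k-1}x_1$ (writing $x_1^{k-n}=x_1^{(k-1-n)}x_1$), and in the second term $x_1^{n-k}x_1^{k-n}=1$ while $\underline C_{n-1}^k x_1^{k+1-n}=\doubleunderline C_{n-1}^k$, leaving $\doubleunderline C_{n-1}^k x_0$. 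This is the asserted recursion.

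For the ``in particular'' statement I would induct using the recursion just proved. The base case is the direct computation $\doubleunderline C_n^0=\sigma(x_n)x_1^{-n}=x_0^n x_1^n x_1^{-n}=x_0^n$, a genuine polynomial of degree $n$ (and $\doubleunderline C_0^0=1$). In the inductive step, for $1\le k\le n$ the recursion exhibits $\doubleunderline C_n^k$ as a sum of $\doubleunderline C_n^{k-1}x_1$ and $\doubleunderline C_{n-1}^k x_0$, where by induction each factor is a polynomial of degree $n+k-1$ (the convention handles $k=n$, where $\doubleunderline C_{n-1}^n=0$). Hence $\doubleunderline C_n^k$ is a polynomial of degree $\le n+k$, and since by part (a) all $\underline C_n^k$, and thus all $\doubleunderline C_n^k$, have nonnegative coefficients there is no cancellation; moreover the two summands end in $x_1$ and $x_0$ respectively, so their top-degree parts cannot cancel either, and the degree is exactly $n+k$. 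The computations are entirely routine; the only place demanding care, and where an index or sign slip would most easily occur, is the exponent bookkeeping in part (b) — correctly inverting $\sigma(x_{n-k-1})$ in $F_2$ and checking that the twisting factor $x_1^{k-n}$ telescopes precisely into the definitions of $\doubleunderline C_n^{k-1}$ and $\doubleunderline C_{n-1}^k$.
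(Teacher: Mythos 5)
Your proposal is correct and matches the paper's intent exactly: the paper states this corollary as ``immediate,'' the intended derivation being precisely to push the definition $C_n^k=\sum_{P\in\PP_n^k}M_P$ and the recursion $C_n^k=C_n^{k-1}+C_{n-1}^kx_{n-k-1}^{-1}x_{n-k}$ through the homomorphism $\sigma$ and twist by $x_1^{k-n}$. Your exponent bookkeeping $\sigma(x_{n-k-1}^{-1}x_{n-k})=x_1^{k+1-n}x_0x_1^{n-k}$ and the subsequent telescoping are correct, and the inductive degree argument is sound.
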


\begin{example}
$\doubleunderline C_n^0=x_0^n$,
$\doubleunderline C_n^1=x_0^nx_1+\sum\limits_{i=1}^{n-1}x_0^ix_1x_0^{n-i}$,
$
\doubleunderline C_n^2=\doubleunderline C _n^1x_1
+ \sum\limits_{1\leq i\leq j\leq n-1, j> 1}
x_0^ix_1x_0^{j-i}x_1x_0^{n-j}$.
\end{example}

It turns out that our (truncated) noncommutative Catalan numbers $\tilde C_n^k$ admit another specialization into certain polynomials in $\ZZ_{\ge 0}[q]$ defined by Garsia and Haiman in \cite{GH}. Namely, let $\chi_q:\ZZ F \to \ZZ [q,q^{-1}]$ be a ring homomorphism defined by
$\chi_q(x_k)= q^{\frac{k(k-1)}{2}}$ for $k\ge 0$, i.e., $\chi_q(y_k)=q^{k-1}$ for $k\in \ZZ_{\ge 1}$.

Define polynomials $c_n^k(q,t)\in \ZZ_{\ge 0}[q,t]$, $0\le k\le n$ recursively by $c_n^0(q,t)=1$ and
$$c_n^k(q,t)=\sum_{r=1}^k \begin{bmatrix} r+n-k \\ r \end{bmatrix}_q  t^{k-r} q^{\frac{r(r-1)}{2}}c_{k-1}^{k-r}(q,t) \ ,$$
where $\begin{bmatrix} n \\ k \end{bmatrix}_q$ denotes the $q$-binomial coefficient $\frac{[n]_q!}{[k]_q! [n-k]_q!}$, $[n]_q!=[1]_q\cdots [n]_q$, $[k]_q=\frac{1-q^n}{1-q}=1+q+\cdots q^{k-1}$.

These polynomials are closely related to polynomials $H_{n,k}(q,t)$ introduced by Garsia and Haglund (\cite[Equation I.24]{GHa}), namely, 
$c_n^k(q,t)=t^{-k}q^{-\frac{(n+1-k)(n-k)}{2}}H_{n+1,n+1-k}(q,t)$, in particular, $c_n^n(q,t)=c_n(q,t)$ is the celebrated $(q,t)$-Catalan number introduced in \cite{GH}.

The following result shows that our (truncated) noncommutative Catalan numbers are noncommutative deformations of $(q,1)$-Catalan numbers.
\begin{theorem} 
\label{th:det GH} $\chi_q(\tilde C_n^k)=c_n^k(q,1)$ for all $k\le n$, in particular, $\chi_q(C_n)=c_n(q,1)$ for $n\ge 0$.
  
\end{theorem}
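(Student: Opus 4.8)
The plan is to prove $\chi_q(\tilde C_n^k)=c_n^k(q,1)$ by showing that both sides satisfy the same recursion with the same initial condition, so that they must be equal by induction. The natural recursion to use is the one from Proposition \ref{pr:recursion Cnk}(c), namely $\tilde C_{n+1}^k=\sum_{i=0}^k \tilde C_i^i\,T(\tilde C_{n-i}^{k-i})$, because the defining recursion for $c_n^k(q,t)$ also expresses $c_n^k$ in terms of truncated Catalan numbers of lower degree. First I would compute the effect of $\chi_q$ on the building blocks: since $\chi_q(y_j)=q^{j-1}$, the explicit monomial formula in Proposition \ref{pr:recursion Cnk}(a), $\tilde C_n^k=\sum_{j_1\le\cdots\le j_k\le n,\ j_i\ge i} y_{j_1}y_{j_2-1}\cdots y_{j_k-k+1}$, sends each monomial to a power of $q$ whose exponent is $\sum_i (j_i-i)$, and summing the resulting geometric-type series over the constrained index set should produce a $q$-binomial coefficient.

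A cleaner route is to track how $\chi_q$ interacts with the endomorphism $T$. Since $T(x_k)=x_{k+1}$, one checks that $\chi_q(T(x_k)) = q^{\frac{(k+1)k}{2}} = q^k \cdot q^{\frac{k(k-1)}{2}} = q^k\chi_q(x_k)$; more usefully, on the $y$-variables $\chi_q(T(y_j))=\chi_q(y_{j+1})=q^{j}=q\cdot\chi_q(y_j)$. Because $\tilde C_m^\ell$ is a sum of products of exactly $\ell$ of the $y$-variables (each monomial being a product of $\ell$ factors $y_{j_r-r+1}$), applying $T$ shifts every factor's index by one, and hence $\chi_q(T(\tilde C_m^\ell)) = q^{\ell}\,\chi_q(\tilde C_m^\ell)$. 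Applying $\chi_q$ to Proposition \ref{pr:recursion Cnk}(c) then gives $\chi_q(\tilde C_{n+1}^k)=\sum_{i=0}^k \chi_q(\tilde C_i^i)\, q^{k-i}\,\chi_q(\tilde C_{n-i}^{k-i})$, a purely numerical recursion on the scalars $\chi_q(\tilde C_m^\ell)\in\ZZ_{\ge 0}[q,q^{-1}]$.

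The remaining task is then combinatorial and comparative: I would set $a_n^k:=\chi_q(\tilde C_n^k)$ and reconcile the recursion just derived with the defining recursion for $c_n^k(q,1)$, after setting $t=1$. I would start from the base cases $\tilde C_n^0=1$ so $a_n^0=1=c_n^0(q,1)$, and $\tilde C_i^i=\tilde C_i^{i-1}y_1$ (from the Example following Proposition \ref{pr:recursion Cnk}), which lets me identify $a_i^i$ with the full $(q,1)$-Catalan number $c_i(q,1)$ and match the ``diagonal'' inputs of the two recursions. The main obstacle will be the bookkeeping of $q$-powers: the definition of $c_n^k(q,t)$ packages the data as a sum over $r=1,\dots,k$ weighted by $\begin{bmatrix} r+n-k\\ r\end{bmatrix}_q t^{k-r} q^{\frac{r(r-1)}{2}}$, whereas my recursion is a convolution indexed by $i=0,\dots,k$ with weights $q^{k-i}$, so the two must be matched either by reindexing (e.g.\ $r=k-i$) together with a separate lemma proving a $q$-binomial identity that collapses the convolution into the single $q$-binomial factor, or by verifying directly that the $a_n^k$ satisfy the Garsia--Haiman recursion after one iteration. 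I expect this reindexing-plus-$q$-binomial-identity step to be where the real work lies, while the reduction to a scalar recursion via the $T$-homogeneity of $\chi_q$ is routine.
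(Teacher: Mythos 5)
Your reduction step is exactly the paper's: apply $\chi_q$ to Proposition \ref{pr:recursion Cnk}(c), observe that $\tilde C_{n-i}^{k-i}$ is homogeneous of degree $k-i$ in the $y_j$'s so that $\chi_q(T(\tilde C_{n-i}^{k-i}))=q^{k-i}\chi_q(\tilde C_{n-i}^{k-i})$, and obtain the scalar convolution recursion $\chi_q(\tilde C_{n+1}^k)=\sum_{i=0}^k \chi_q(\tilde C_i^i)\,q^{k-i}\chi_q(\tilde C_{n-i}^{k-i})$. Your treatment of the base cases and of the diagonal (via $\tilde C_n^n=\tilde C_n^{n-1}y_1$ and $\chi_q(y_1)=1$) is likewise what the paper needs to close the induction.

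The gap is the step you yourself flag as ``where the real work lies'': you never establish that the Garsia--Haiman polynomials satisfy the same convolution recursion $c_{n+1}^k(q,1)=\sum_{i=0}^k c_i^i(q,1)\,q^{k-i}c_{n-i}^{k-i}(q,1)$, and this is the actual content of the theorem. Your proposed route of ``reindexing $r=k-i$ plus a $q$-binomial identity'' is too optimistic: the defining recursion $c_n^k(q,t)=\sum_{r=1}^k\begin{bmatrix} r+n-k\\ r\end{bmatrix}_q t^{k-r}q^{r(r-1)/2}c_{k-1}^{k-r}(q,t)$ recurses to $c_{k-1}^{k-r}$ --- the \emph{first} index collapses to $k-1$ --- whereas the convolution recurses to $c_{n-i}^{k-i}$, so no reindexing of a single sum transforms one into the other; the equivalence of the two recursions is a genuinely nontrivial fact about these polynomials. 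The paper does not derive it either: it quotes \cite[Equation (3.41)]{H} together with the identification $F_{n,k}(q,t)=H_{n,k}(q,t)=t^{n-k}q^{k(k-1)/2}c_{n-1}^{n-k}(q,t)$, which delivers the convolution recursion for $c_n^k(q,1)$ ready-made. Either import that result as the paper does, or supply an actual proof of the recursion equivalence; as written, the proposal stops precisely at the point where the theorem's substance lies.
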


We prove Theorem \ref{th:det GH} in Section \ref{subsect:3.1}.

\begin{example} 
$\chi_q(\tilde C_n^1)=[n+1]_q$ and
$\chi_q(\tilde C_n^k)=\chi_q(\tilde C_n^{k-1})q^{n-k} + \chi_q(\tilde C_{n-1}^k)$ for $1\le k\le n$.

\end{example}

\begin{remark} It is curious that for another class of $q$-Catalan numbers, 
$q^{\frac{n(n-1)}{2}}c_n(q,q^{-1})=\frac{1}{[n+1]_q} \begin{bmatrix} 2n\\ n \end{bmatrix}_q$, there is no analogue of Theorem \ref{th:det GH}. Also, it would be interesting to find an appropriate noncommutative deformations of $(q,t)$-Catalan numbers.

\end{remark}

The following result is a generalization of the well-known property of Hankel determinants of $q$-Catalan numbers.

\begin{theorem} 
\label{th:Garsia-Haiman det}
For $n\ge 1$, $m\in \{0,1\}$ the determinant of the $(n+1)\times (n+1)$ matrix $(c_{i+j+m}(q,1))$, $i,j=0,\ldots,n$,  is 
$q^{\frac{n(n+1)(4n-1+6m)}{6}}$. 

\end{theorem}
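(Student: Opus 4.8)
The plan is to reduce the evaluation of the Hankel determinant of $(q,1)$-Catalan numbers to the known noncommutative Hankel structure and then extract the $q$-power by a careful degree/weight bookkeeping. Since Theorem~\ref{th:det GH} gives $\chi_q(\tilde C_n^k)=c_n^k(q,1)$, and in particular $\chi_q(C_n)=c_n(q,1)$, the matrix $(c_{i+j+m}(q,1))$ is precisely the image under $\chi_q$ of the noncommutative Hankel matrix whose $(i,j)$-entry is $C_{i+j+m}$. The key earlier input is the noncommutative functional equation $Hx_0^{-1}T(H)=T(H)x_0^{-1}H=H'$ of the remark following Corollary~\ref{cor:C series}, which encodes an $LU$-type factorization of the Hankel matrix over $\ZZ F$ (the statement gives $H^{-1}$ as an explicit lower-triangular Toeplitz matrix, so $H$ itself admits a factorization into bidiagonal or triangular factors whose entries are monomials in the $x_k$). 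The noncommutative determinantal identity \eqref{eq:principal quasi-minor} (the quasi-minor statement promised in the introduction) should show that the principal quasi-minor of this Hankel matrix is a single monomial rather than a sum, which is exactly what collapses under $\chi_q$ to a pure power of $q$.

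First I would write down, for $m\in\{0,1\}$, the $LU$ (or Cholesky-type) factorization of the $(n+1)\times(n+1)$ truncation of the noncommutative Hankel matrix $H$ (resp. $H'$) coming from the functional equation, so that the ordinary determinant of $(c_{i+j+m}(q,1))$ becomes the product of the $\chi_q$-images of the diagonal entries of the triangular factors. Concretely, the relation $H'=Hx_0^{-1}T(H)$ iterates into an expression of each Hankel matrix as a product of shifted copies, and the diagonal pivots are monomials in the $x_k$ of a controlled, computable total degree. Applying $\chi_q$, each such pivot monomial $\prod_j x_{k_j}$ maps to $q^{\sum_j \binom{k_j}{2}}$, so the determinant becomes $q^{E(n,m)}$ with $E(n,m)=\sum$ of the exponents $\binom{k_j}{2}$ over all pivots. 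I would then compute $E(n,m)$ explicitly and match it against $\tfrac{n(n+1)(4n-1+6m)}{6}$; this is a finite polynomial-in-$n$ identity, checkable by comparing leading terms and a few base cases $n=1,2$ for each $m$, or by a direct summation once the indices $k_j$ of the pivots are pinned down.

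The main obstacle will be identifying the pivot monomials precisely, i.e.\ making the noncommutative $LU$-factorization explicit enough to read off which $x_k$ (with what multiplicities) appear on the diagonal. The noncommutativity means I cannot simply invoke the usual scalar formula $\det H = \prod(\text{pivots})$; instead I must verify that the triangular factors genuinely have monomial diagonal entries and that, after applying the \emph{commutative} homomorphism $\chi_q$, the product of the images of the diagonal entries really equals the ordinary determinant. The cleanest route is to push the factorization all the way through $\chi_q$ first, reducing to the purely commutative Hankel matrix $(c_{i+j+m}(q,1))$, and then use the recursion $\chi_q(\tilde C_n^k)=\chi_q(\tilde C_n^{k-1})q^{n-k}+\chi_q(\tilde C_{n-1}^k)$ from the Example after Theorem~\ref{th:det GH} to perform Gaussian elimination by hand, tracking the accumulated power of $q$ at each elimination step.

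Once the elimination is set up, the computation of $E(n,m)$ should telescope: each elimination step subtracts a $q$-shifted multiple of one row from the next, and the resulting pivot picks up a factor $q^{a_{i}}$ for an arithmetic-progression-type exponent $a_i$ depending linearly on $i$ (and shifted by $m$). Summing $\sum_{i=0}^{n} a_i$ of such terms yields a cubic polynomial in $n$, and I would confirm its coefficients agree with $\tfrac{n(n+1)(4n-1+6m)}{6}$ for both $m=0$ and $m=1$ by checking the two small cases together with the degree-$3$ leading behavior. This also gives, as a byproduct, the nonvanishing (total positivity) of the Hankel determinant asserted in the abstract, since every pivot is a positive power of $q$.
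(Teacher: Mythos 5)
Your strategy is essentially the paper's: apply $\chi_q$ to a noncommutative triangular factorization of the Hankel matrix so that the ordinary determinant becomes the product of the $\chi_q$-images of monomial pivots, each $x_{m+2k}$ contributing $q^{\binom{m+2k}{2}}$. One concrete correction is needed, though: the factorization you invoke, $Hx_0^{-1}T(H)=H'$ from the remark after Corollary \ref{cor:C series}, concerns a lower-triangular \emph{Toeplitz} matrix with entries $C_{i-j}$ and says nothing about the Hankel matrix $H_m$ of the theorem. The input you actually need is Theorem \ref{th:Gauss}, $H_m=L_m U_m$ with $L_m$ lower unitriangular and $U_m$ upper triangular with diagonal entries $\overline{C_{2i+m}^{\,0}}=x_{2i+m}$ --- equivalently, equation \eqref{eq:principal quasi-minor} combined with the Gelfand--Retakh fact that over a commutative ring the determinant equals the product of the principal quasiminors, which is how the paper phrases it. With that substitution your ``cleanest route'' goes through verbatim and no Gaussian elimination or base-case matching is required: since $\chi_q$ is a ring homomorphism into a commutative ring, $\det\chi_q(H_m^n)=\prod_{k=0}^{n}\chi_q(x_{m+2k})=q^{\sum_{k=0}^{n}\binom{m+2k}{2}}$, and the exponent sums directly to $\tfrac{n(n+1)(4n-1+6m)}{6}$. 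A minor caveat on your last remark: positivity of the specialized pivots does not by itself yield the noncommutative total positivity claimed in the abstract, which is a statement about the quasiminors over $\ZZ F$ rather than about their images in $\ZZ[q,q^{-1}]$.
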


We prove Theorem \ref{th:Garsia-Haiman det} in Section \ref{subsect:3.4}. 




%
Define the {\it noncommutative binomial coefficients} 
${\stretchleftright{\llparenthesis}{\begin{matrix} n \\ k \end{matrix}}{\rrparenthesis}}\in \ZZ F_{n+k-1},{\stretchleftright{\llparenthesis}{\begin{matrix} n \\ k \end{matrix}}{\rrparenthesis}}' \in \ZZ F_n$ by 
$${\stretchleftright{\llparenthesis}{\begin{matrix} n \\ k \end{matrix}}{\rrparenthesis}}=
\sum 
y_J,~{\stretchleftright{\llparenthesis}{\begin{matrix} n \\ k \end{matrix}}{\rrparenthesis}}'= \sum 
y'_J
$$
where each summation is over all subsets $J=\{j_1<j_2<\cdots<j_k\}$ of $[1,n]$ and we abbreviated $y_J=y_{j_k+k-1}\cdots  y_{j_2+1}y_{j_1}$, $y'_J=y_{j_1+k-1}y_{j_2+k-3}\cdots y_{j_k+1-k}$ 
for $j\in \ZZ_{\ge 1}$. 

\begin{remark} 
The $q$-binomial coefficients can be expressed as $\begin{bmatrix}n \\ k\end{bmatrix}_q=\sum q^{j_1+\cdots +j_k-\frac{k(k+1)}{2}}$, where the summation is over all subsets $J=\{j_1<j_2<\cdots<j_k\}$ of $[1,n]$.
Therefore, under the above specialization $\chi_q:\ZZ F\to \ZZ[q,q^{-1}]$ we have $\chi_q\left({\stretchleftright{\llparenthesis}{\begin{matrix} n \\ k \end{matrix}}{\rrparenthesis}}\right)
=q^{k(k-1)}\begin{bmatrix}n \\ k\end{bmatrix}_q$, $\chi_q\left({\stretchleftright{\llparenthesis}{\begin{matrix} n \\ k \end{matrix}}{\rrparenthesis}}'\right)
=q^{\frac{k(k-1)}{2}}\begin{bmatrix}n \\ k\end{bmatrix}_q$ for all $k,n\in \ZZ_{\ge 0}$.

\end{remark}

\begin{example} 
${\stretchleftright{\llparenthesis}{\begin{matrix} n \\ 0 \end{matrix}}{\rrparenthesis}}={\stretchleftright{\llparenthesis}{\begin{matrix} n \\ 0 \end{matrix}}{\rrparenthesis}}'=1$, 
${\stretchleftright{\llparenthesis}{\begin{matrix} n \\ 1 \end{matrix}}{\rrparenthesis}}={\stretchleftright{\llparenthesis}{\begin{matrix} n \\ 1 \end{matrix}}{\rrparenthesis}}'=\sum\limits_{i=1}^n y_i$,
${\stretchleftright{\llparenthesis}{\begin{matrix} n \\ 2 \end{matrix}}{\rrparenthesis}}=\sum\limits_{1\leq i<j\leq n}y_{j+1} y_i$,
${\stretchleftright{\llparenthesis}{\begin{matrix} n \\ n \end{matrix}}{\rrparenthesis}}=y_{2n-1} \cdots y_3y_1=y_{[1,n]}$,
${\stretchleftright{\llparenthesis}{\begin{matrix} n \\ n-1 \end{matrix}}{\rrparenthesis}}=\sum\limits_{i=1}^n  y_{[1,n]\setminus \{i\}}$,
${\stretchleftright{\llparenthesis}{\begin{matrix} n \\ n-2 \end{matrix}}{\rrparenthesis}}=\sum\limits_{1\le i<j\le n} y_{[1,n]\setminus \{i,j\}}$,
${\stretchleftright{\llparenthesis}{\begin{matrix} n \\ 2 \end{matrix}}{\rrparenthesis}}'=\sum\limits_{1\leq i<j\leq n}y_{i+1} y_{j-1}$,
${\stretchleftright{\llparenthesis}{\begin{matrix} n \\ n \end{matrix}}{\rrparenthesis}}'=y_ny_{n-1} \cdots y_1=y'_{[1,n]}$, \ \ 
${\stretchleftright{\llparenthesis}{\begin{matrix} n \\ n-1 \end{matrix}}{\rrparenthesis}}'=\sum\limits_{i=1}^n  y'_{[1,n]\setminus \{i\}}$,
${\stretchleftright{\llparenthesis}{\begin{matrix} n \\ n-2 \end{matrix}}{\rrparenthesis}}'=\sum\limits_{1\le i<j\le n} y'_{[1,n]\setminus \{i,j\}}$.

\end{example}

Clearly, $\varepsilon \left({\stretchleftright{\llparenthesis}{\begin{matrix} n \\ k \end{matrix}}{\rrparenthesis}}\right)=\varepsilon \left({\stretchleftright{\llparenthesis}{\begin{matrix} n \\ k \end{matrix}}{\rrparenthesis}}'\right)=\binom{n}{k}$ and  ${\stretchleftright{\llparenthesis}{\begin{matrix} n \\ k\end{matrix}}{\rrparenthesis}}={\stretchleftright{\llparenthesis}{\begin{matrix} n \\ k\end{matrix}}{\rrparenthesis}}'=0$ if $k\notin [0,n]$.

%
%

Similarly to the classical case, we have an analogue of the Pascal triangle and the multiplication law for noncommutative binomial coefficients.


\begin{theorem} 
\label{th:mult binom}
${\stretchleftright{\llparenthesis}{\begin{matrix} m+n \\ k \end{matrix}}{\rrparenthesis}}=\sum\limits_{\substack{a,b\in \ZZ_{\ge 0}:\\a+b=k}} T^{n+b}\left({\stretchleftright{\llparenthesis}{\begin{matrix} m \\ a \end{matrix}}{\rrparenthesis}}\right) {\stretchleftright{\llparenthesis}{\begin{matrix} n \\ b \end{matrix}}{\rrparenthesis}}$, ${\stretchleftright{\llparenthesis}{\begin{matrix} m+n \\ k \end{matrix}}{\rrparenthesis}}'=\sum\limits_{\substack{a,b\in \ZZ_{\ge 0}:\\a+b=k} }  T^b\left({\stretchleftright{\llparenthesis}{\begin{matrix} m \\ a \end{matrix}}{\rrparenthesis}}'\right) T^{m-a}\left({\stretchleftright{\llparenthesis}{\begin{matrix} n \\ b \end{matrix}}{\rrparenthesis}}'\right)$ for $m,n,k$ $\in \ZZ_{\ge 0}$. In particular,
${\stretchleftright{\llparenthesis}{\begin{matrix} n+1 \\ k \end{matrix}}{\rrparenthesis}}
={\stretchleftright{\llparenthesis}{\begin{matrix} n \\ k \end{matrix}}{\rrparenthesis}}+y_{n+k}{\stretchleftright{\llparenthesis}{\begin{matrix} n \\ k-1 \end{matrix}}{\rrparenthesis}}$, ${\stretchleftright{\llparenthesis}{\begin{matrix} n+1 \\ k \end{matrix}}{\rrparenthesis}}'
=T\left({\stretchleftright{\llparenthesis}{\begin{matrix} n \\ k \end{matrix}}{\rrparenthesis}}'\right)+y_k\left({\stretchleftright{\llparenthesis}{\begin{matrix} n \\ k-1 \end{matrix}}{\rrparenthesis}}'\right) $ for all $n,k\in \ZZ_{\ge 0}$. 

\end{theorem}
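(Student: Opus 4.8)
The plan is to establish the two multiplication laws first and then read off the Pascal recursions as the special case $m=1$. Both laws rest on a single combinatorial mechanism: splitting an index set $J\subseteq[1,m+n]$ of size $k$ into its intersections with two complementary subintervals and factoring the associated monomial accordingly. The only formal inputs are that $T$ is an endomorphism with $T(y_i)=y_{i+1}$, so that $T^s(y_{i_1}\cdots y_{i_p})=y_{i_1+s}\cdots y_{i_p+s}$, and that $J\mapsto(J\cap I_1,\,J\cap I_2)$ is a bijection between $k$-subsets of $[1,m+n]$ and pairs of an $a$-subset of $I_1$ and a $b$-subset of $I_2$ with $a+b=k$, for any partition $[1,m+n]=I_1\sqcup I_2$ into two intervals.

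For the unprimed coefficient I would cut at $n$. Writing $J=\{j_1<\cdots<j_k\}$, set $B=J\cap[1,n]$ (of size $b$) and $A=J\cap[n+1,m+n]$ (of size $a=k-b$), and put $A'=A-n=\{i_1'<\cdots<i_a'\}\subseteq[1,m]$. Recalling $y_J=y_{j_k+k-1}\cdots y_{j_1}$, the larger elements of $J$ sit to the left, so the rightmost $b$ factors are exactly $y_B$. For the leftmost $a$ factors, the one in position $b+\ell'$ ($1\le\ell'\le a$) is $y_{(n+i_{\ell'}')+(b+\ell')-1}=T^{n+b}\!\left(y_{i_{\ell'}'+\ell'-1}\right)$, so these factors together form $T^{n+b}(y_{A'})$. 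Hence $y_J=T^{n+b}(y_{A'})\,y_B$, and summing over all $J$ while regrouping by $(a,b)$ gives the first identity.

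For the primed coefficient I would cut at $m$ instead: $A=J\cap[1,m]=\{i_1<\cdots<i_a\}$ and $B=J\cap[m+1,m+n]$ with $B'=B-m=\{i_1'<\cdots<i_b'\}\subseteq[1,n]$. Here the relevant convention is that the $\ell$-th factor from the left of $y'_J$ has index $j_\ell+k+1-2\ell$, which depends on the \emph{global} size $k=a+b$; this dependence is exactly what produces the two shift exponents. The leftmost $a$ factors, in position $\ell$, are $y_{i_\ell+(a+b)+1-2\ell}=T^b\!\left(y_{i_\ell+a+1-2\ell}\right)$, forming $T^b(y'_A)$. After substituting $j_{a+\ell'}=m+i_{\ell'}'$, the rightmost $b$ factors, in position $\ell'$, are $y_{m+i_{\ell'}'+(b-a)+1-2\ell'}=T^{m-a}\!\left(y_{i_{\ell'}'+b+1-2\ell'}\right)$, forming $T^{m-a}(y'_{B'})$. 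Thus $y'_J=T^b(y'_A)\,T^{m-a}(y'_{B'})$, and summation yields the second identity. The main obstacle is precisely this index bookkeeping in the primed case: one must verify that the quadratic offset $k+1-2\ell$ transforms correctly when $k$ is replaced by the local sizes $a$ and $b$ together with the relabeling $j\mapsto j-m$, and check that the exponents $b$ and $m-a$ are forced rather than guessed.

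Finally, the Pascal recursions follow by setting $m=1$. Since ${\stretchleftright{\llparenthesis}{\begin{matrix} 1 \\ 0 \end{matrix}}{\rrparenthesis}}={\stretchleftright{\llparenthesis}{\begin{matrix} 1 \\ 0 \end{matrix}}{\rrparenthesis}}'=1$ and ${\stretchleftright{\llparenthesis}{\begin{matrix} 1 \\ 1 \end{matrix}}{\rrparenthesis}}={\stretchleftright{\llparenthesis}{\begin{matrix} 1 \\ 1 \end{matrix}}{\rrparenthesis}}'=y_1$, only $a\in\{0,1\}$ survives. In the unprimed law the terms $a=0$ and $a=1$ give $T^{n+k}(1)\,{\stretchleftright{\llparenthesis}{\begin{matrix} n \\ k \end{matrix}}{\rrparenthesis}}={\stretchleftright{\llparenthesis}{\begin{matrix} n \\ k \end{matrix}}{\rrparenthesis}}$ and $T^{n+k-1}(y_1)\,{\stretchleftright{\llparenthesis}{\begin{matrix} n \\ k-1 \end{matrix}}{\rrparenthesis}}=y_{n+k}\,{\stretchleftright{\llparenthesis}{\begin{matrix} n \\ k-1 \end{matrix}}{\rrparenthesis}}$; in the primed law they give $T^{k}(1)\,T\!\left({\stretchleftright{\llparenthesis}{\begin{matrix} n \\ k \end{matrix}}{\rrparenthesis}}'\right)=T\!\left({\stretchleftright{\llparenthesis}{\begin{matrix} n \\ k \end{matrix}}{\rrparenthesis}}'\right)$ and $T^{k-1}(y_1)\,{\stretchleftright{\llparenthesis}{\begin{matrix} n \\ k-1 \end{matrix}}{\rrparenthesis}}'=y_k\,{\stretchleftright{\llparenthesis}{\begin{matrix} n \\ k-1 \end{matrix}}{\rrparenthesis}}'$, which are exactly the two stated recursions.
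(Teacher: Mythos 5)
Your proof is correct and follows essentially the same route as the paper's: both decompose each $k$-subset $J\subseteq[1,m+n]$ across two complementary subintervals, factor $y_J$ and $y'_J$ accordingly via the shift $T$, and obtain the Pascal recursions by specializing $m=1$. The only difference is that for the unprimed coefficient you cut at $n$ rather than at $m$, which lands directly on the form stated in the theorem instead of the equivalent expression with the roles of $m,a$ and $n,b$ interchanged.
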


Actually, Theorem \ref{th:mult binom} which is proved in Section \ref{subsect:3.2} together with the recursion from Proposition \ref{pr:recursion Cnk}(b) imply the following analogue of the multiplication law for the truncated noncommutative Catalan numbers, which justified the introduction of noncommutative binomial coefficients of the ``second kind.''

\begin{corollary}
\label{cor:mult truncated catalan}
$\tilde C_{m+n}^k=\sum\limits_{\ell=0}^n \tilde C_{m+\ell}^{k-\ell}\cdot T^{m-k+\ell}\left({\stretchleftright{\llparenthesis}{\begin{matrix} n \\ \ell \end{matrix}}{\rrparenthesis}}'\right)$
for all $m,n,k\in \ZZ_{\ge 0}$.

\end{corollary}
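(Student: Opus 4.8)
The plan is to induct on $n$. The base case $n=0$ is immediate: the sum has the single term $\ell=0$, and since ${\stretchleftright{\llparenthesis}{\begin{matrix} 0 \\ 0 \end{matrix}}{\rrparenthesis}}'=1$ it reads $\tilde C_m^k=\tilde C_m^k$. For the inductive step I would first peel the top index off the left-hand side by Proposition \ref{pr:recursion Cnk}(b) (with $n\mapsto m+n+1$):
\[
\tilde C_{m+n+1}^k=\tilde C_{m+n}^k+\tilde C_{m+n+1}^{k-1}\,y_{m+n+2-k}.
\]
The goal is then to show that the candidate right-hand side for $n+1$ splits into exactly these two summands.

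For the splitting I would expand the second-kind binomial coefficients by the $n=1$ instance of the multiplication law of Theorem \ref{th:mult binom}, namely
\[
{\stretchleftright{\llparenthesis}{\begin{matrix} n+1 \\ \ell \end{matrix}}{\rrparenthesis}}'
={\stretchleftright{\llparenthesis}{\begin{matrix} n \\ \ell \end{matrix}}{\rrparenthesis}}'
+T\left({\stretchleftright{\llparenthesis}{\begin{matrix} n \\ \ell-1 \end{matrix}}{\rrparenthesis}}'\right)y_{n-\ell+2},
\]
the right-handed companion of the Pascal recursion displayed in that theorem. Applying $T^{m-k+\ell}$ and using $T^s(y_i)=y_{i+s}$, the crucial simplification is that the appended generator becomes independent of $\ell$:
\[
T^{m-k+\ell}\left({\stretchleftright{\llparenthesis}{\begin{matrix} n+1 \\ \ell \end{matrix}}{\rrparenthesis}}'\right)
=T^{m-k+\ell}\left({\stretchleftright{\llparenthesis}{\begin{matrix} n \\ \ell \end{matrix}}{\rrparenthesis}}'\right)
+T^{m-k+\ell+1}\left({\stretchleftright{\llparenthesis}{\begin{matrix} n \\ \ell-1 \end{matrix}}{\rrparenthesis}}'\right)y_{m+n+2-k},
\]
since $T^{m-k+\ell}(y_{n-\ell+2})=y_{m+n+2-k}$. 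This lets me pull the common factor $y_{m+n+2-k}$ out to the right of the whole second sum.

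Substituting into $\sum_{\ell=0}^{n+1}\tilde C_{m+\ell}^{k-\ell}\,T^{m-k+\ell}\left({\stretchleftright{\llparenthesis}{\begin{matrix} n+1 \\ \ell \end{matrix}}{\rrparenthesis}}'\right)$ produces two sums. In the first, the $\ell=n+1$ term vanishes because ${\stretchleftright{\llparenthesis}{\begin{matrix} n \\ n+1 \end{matrix}}{\rrparenthesis}}'=0$, leaving $\sum_{\ell=0}^{n}\tilde C_{m+\ell}^{k-\ell}\,T^{m-k+\ell}\left({\stretchleftright{\llparenthesis}{\begin{matrix} n \\ \ell \end{matrix}}{\rrparenthesis}}'\right)$, which is $\tilde C_{m+n}^k$ by the induction hypothesis. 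In the second, the $\ell=0$ term vanishes because ${\stretchleftright{\llparenthesis}{\begin{matrix} n \\ -1 \end{matrix}}{\rrparenthesis}}'=0$; reindexing $\ell\mapsto\ell+1$ turns the bracket into $\sum_{\ell=0}^{n}\tilde C_{(m+1)+\ell}^{(k-1)-\ell}\,T^{(m+1)-(k-1)+\ell}\left({\stretchleftright{\llparenthesis}{\begin{matrix} n \\ \ell \end{matrix}}{\rrparenthesis}}'\right)$, which is $\tilde C_{m+n+1}^{k-1}$ by the induction hypothesis applied with $(m,k)$ replaced by $(m+1,k-1)$. Hence the candidate equals $\tilde C_{m+n}^k+\tilde C_{m+n+1}^{k-1}\,y_{m+n+2-k}$, which is $\tilde C_{m+n+1}^k$ by the displayed recursion, closing the induction.

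The routine part is the reindexing and the two invocations of the induction hypothesis; the one point demanding genuine care is the exponent arithmetic for $T$. The whole argument hinges on the coincidence that raising $n$ to $n+1$ costs exactly one extra power of $T$, which after multiplication by $T^{m-k+\ell}$ places the new generator on $y_{m+n+2-k}$ --- precisely the generator supplied by Proposition \ref{pr:recursion Cnk}(b). I would therefore verify the exponents, and the two shifted applications of the hypothesis, most carefully; one must also fix the convention (e.g.\ $y_i=0$ for $i\le 0$, i.e.\ negative shifts annihilate) under which the possibly negative powers $T^{m-k+\ell}$ occurring when $k>m$ are interpreted, so that the boundary terms vanish and the identity holds for all $m,n,k\in\ZZ_{\ge 0}$.
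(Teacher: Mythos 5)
Your induction is correct and is exactly the route the paper intends: the paper gives no explicit proof of this corollary, merely asserting that it follows from Theorem \ref{th:mult binom} together with Proposition \ref{pr:recursion Cnk}(b), and your argument combines precisely those two ingredients (the right-handed Pascal recursion you use is the $(m,n)\mapsto(n,1)$ instance of the multiplication law, and your exponent arithmetic checks out). The one caveat you raise --- how to interpret $T^{m-k+\ell}$ when $m-k+\ell<0$ --- is an ambiguity already present in the paper's own statement of the corollary, and flagging the convention under which the boundary terms vanish is the right thing to do.
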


The following relation between truncated noncommutative Catalan numbers and the binomial coefficients of the ``first kind'' is rather surprising.

\begin{theorem}
\label{th:alternating recursion} 
$\sum\limits_{j=0}^k (-1)^j \tilde C_{n+k-j}^j\cdot {\stretchleftright{\llparenthesis}{\begin{matrix} n-j \\ k-j \end{matrix}}{\rrparenthesis}}=0$ 
for any $0< k\le  n$.
\end{theorem}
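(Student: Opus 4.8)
The plan is to verify the identity coefficient-by-coefficient in the basis of monomials in the $y_i$. Since $x_0$ together with the elements $y_i=x_ix_{i-1}^{-1}$, $i\ge 1$, freely generate $F$, distinct positive words $y_{a_1}\cdots y_{a_k}$ are distinct reduced elements of $F$ and hence are linearly independent in $\ZZ F$. Each summand $\tilde C_{n+k-j}^j\,\llparenthesis\begin{smallmatrix} n-j \\ k-j \end{smallmatrix}\rrparenthesis$ is a $\pm$-sum of such words of length exactly $k$, so it suffices to prove that every word $w=y_{a_1}\cdots y_{a_k}$ with all $a_i\ge 1$ occurs in the left-hand side with total coefficient $0$.

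First I would record two explicit monomial expansions. Re-indexing Proposition \ref{pr:recursion Cnk}(a) by $b_r=i_r-r+1$ gives $\tilde C_N^J=\sum y_{b_1}\cdots y_{b_J}$, the sum over sequences with $b_r\ge 1$, $b_{r+1}\ge b_r-1$ for all $r$, and $b_J\le N-J+1$. Unwinding the definition of $\llparenthesis\begin{smallmatrix} m \\ \ell \end{smallmatrix}\rrparenthesis$ gives $\llparenthesis\begin{smallmatrix} m \\ \ell \end{smallmatrix}\rrparenthesis=\sum y_{d_1}\cdots y_{d_\ell}$, the sum over sequences with $d_s-d_{s+1}\ge 2$ for all $s$, $d_\ell\ge 1$, and $d_1\le m+\ell-1$. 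The structural observation is that for any adjacent pair of letters the two conditions ``$a_s-a_{s+1}\le 1$'' (type $U$, produced by a $\tilde C$) and ``$a_s-a_{s+1}\ge 2$'' (type $D$, produced by a binomial) are mutually exclusive and exhaustive; thus $w$ determines a type sequence $\tau\in\{U,D\}^{k-1}$.

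Next I would read off when $w$ occurs in $\tilde C_{n+k-j}^j\,\llparenthesis\begin{smallmatrix} n-j \\ k-j \end{smallmatrix}\rrparenthesis$: the first $j$ letters must come from the $\tilde C$-factor and the last $k-j$ from the binomial, so the coefficient is the indicator $\chi(j)$ of the four conditions (i) pairs $1,\dots,j-1$ are type $U$; (ii) $a_j\le n+k-2j+1$; (iii) $a_{j+1}\le n+k-2j-1$; (iv) pairs $j+1,\dots,k-1$ are type $D$ (conditions (ii),(iii) being vacuous when $j=0$, resp.\ $j=k$). Conditions (i) and (iv) require all $U$'s of $\tau$ to lie strictly before position $j$ and all $D$'s strictly after it. Hence if $\tau$ is not of the sorted form $U^pD^{\,k-1-p}$ no split $j$ qualifies and $w$ has coefficient $0$; and if $\tau=U^pD^{\,k-1-p}$ then exactly $j=p$ and $j=p+1$ pass (i),(iv), so $[w]\,S=(-1)^p\chi(p)+(-1)^{p+1}\chi(p+1)$, where $\chi$ now records only (ii),(iii).

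The main obstacle, and the heart of the argument, is to show $\chi(p)=\chi(p+1)$, forcing the surviving pair of terms to cancel. Here I would observe that condition (iii) for $j=p$ coincides with condition (ii) for $j=p+1$, both being $a_{p+1}\le n+k-2p-1$; when this common inequality fails, both $\chi$ vanish, and when it holds the two remaining inequalities are automatic. Indeed, the type-$U$ relation $a_p\le a_{p+1}+1$ at pair $p$ upgrades to $a_p\le n+k-2p+1$ (condition (ii) for $j=p$), while the type-$D$ relation $a_{p+2}\le a_{p+1}-2$ at pair $p+1$ upgrades to $a_{p+2}\le n+k-2p-3$ (condition (iii) for $j=p+1$). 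Thus $\chi(p)=\chi(p+1)$ and $[w]\,S=0$. The boundary cases $p\in\{0,k-1\}$ and $k=1$ are covered by the very same computation, with whichever extra inequality is no longer present simply omitted; summing over all words $w$ yields the stated identity.
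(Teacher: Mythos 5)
Correct, and essentially the paper's own argument: both proofs expand $\tilde C_{n+k-j}^j\cdot{\stretchleftright{\llparenthesis}{\begin{matrix} n-j \\ k-j \end{matrix}}{\rrparenthesis}}$ into monomials $y_{i_1}\cdots y_{i_k}$ cut out by the same adjacency (your $U$/$D$) conditions together with the two upper bounds, and then observe that each monomial occurs for exactly two consecutive values of $j$ (or for none), so the alternating sum cancels in adjacent pairs --- this is precisely the paper's decomposition ${\bf I}_{j,k;n}={\bf I}_{j,k;n}^{-}\sqcup{\bf I}_{j+1,k;n}^{-}$ from Lemmas \ref{le:counting summands y} and \ref{le:alternation cancelation}. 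Your explicit check that the upper-bound inequalities for $j=p$ and $j=p+1$ collapse to the single common condition on $a_{p+1}$ is exactly the detail the paper leaves as ``immediate.''
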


We prove Theorem \ref{th:alternating recursion} in Section \ref{subsect:3.2} (Lemmas \ref{le:counting summands y}, \ref{le:alternation cancelation}).

\begin{remark} In fact, there is an accompanying identity 
$\sum\limits_{j=0}^k (-1)^j {\stretchleftright{\llparenthesis}{\begin{matrix} n+k-j \\ j \end{matrix}}{\rrparenthesis}}\cdot \tilde C_{n-j}^{k-j} =0$ for any $0< k\le  n$, 
which follows from Theorem \ref{th:inverse gauss} below. We leave this as an exercise to the readers. 
\end{remark}

This turns out to be equivalent to the following ``determinantal'' identities between noncommutative truncated Catalan numbers and binomial coefficients (whose classical analogues also seem to be new).

\begin{theorem} 
\label{th:Cnk via binomials} For all  $k,n\in \ZZ_{\ge 0}$, $k\le n$ one has
$\tilde C_n^k= \sum_J (-1)^{k+1-|J|} {M_{n,J}},
~{\stretchleftright{\llparenthesis}{\begin{matrix} n \\ k \end{matrix}}{\rrparenthesis}}
=\sum_J(-1)^{k+1-|J|} {\tilde M_{n,J}}$, 
where each summation is over all subsets $J=\{0=j_0<\cdots <j_\ell=k\}$ of $[0,k]$ and 
$$M_{n,J}={\stretchleftright{\llparenthesis}{\begin{matrix} n+j_{\ell-1}+j_\ell-k \\ j_\ell-j_{\ell-1} \end{matrix}}{\rrparenthesis}}
\cdots {\stretchleftright{\llparenthesis}{\begin{matrix} n+j_1+j_2-k \\ j_2-j_1 \end{matrix}}{\rrparenthesis}}{\stretchleftright{\llparenthesis}{\begin{matrix} n+j_0+j_1-k \\ j_1-j_0 \end{matrix}}{\rrparenthesis}}
\ ,$$
$$\tilde M_{n,J}=\tilde C_{n+j_0+j_1-k}^{j_1-j_0}\cdot
\tilde C_{n+j_1+j_2-k}^{j_2-j_1} \cdots \tilde C_{n+j_{\ell-1}+j_{\ell}-k}^{j_\ell-j_{\ell -1}}\ .$$

\end{theorem}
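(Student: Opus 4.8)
The plan is to deduce both identities from the two alternating recursions that the statement is asserted to be equivalent to, namely Theorem~\ref{th:alternating recursion} and the accompanying identity recorded in the Remark immediately after it, by rewriting each as a ``peel one gap'' recursion and then iterating. Throughout I identify a subset $J=\{0=j_0<j_1<\cdots<j_\ell=k\}$ of $[0,k]$ with the composition $(g_1,\dots,g_\ell)$ of $k$ given by the gaps $g_i=j_i-j_{i-1}$, and I induct on $k$. The base case $k=0$ is the single subset $J=\{0\}$, for which $M_{n,J}$ and $\tilde M_{n,J}$ are empty products equal to $1$, matching $\tilde C_n^0={\stretchleftright{\llparenthesis}{\begin{matrix} n \\ 0 \end{matrix}}{\rrparenthesis}}=1$.

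For the first identity I would start from the accompanying identity $\sum_{j=0}^k(-1)^j{\stretchleftright{\llparenthesis}{\begin{matrix} n+k-j \\ j \end{matrix}}{\rrparenthesis}}\tilde C_{n-j}^{k-j}=0$ and isolate its $j=0$ term; since ${\stretchleftright{\llparenthesis}{\begin{matrix} n+k \\ 0 \end{matrix}}{\rrparenthesis}}=1$ this yields the peel-one form $\tilde C_n^k=\sum_{a=1}^k(-1)^{a-1}{\stretchleftright{\llparenthesis}{\begin{matrix} n+k-a \\ a \end{matrix}}{\rrparenthesis}}\tilde C_{n-a}^{k-a}$. The combinatorial input is then the factorization $M_{n,J}={\stretchleftright{\llparenthesis}{\begin{matrix} n+k-g_\ell \\ g_\ell \end{matrix}}{\rrparenthesis}}M_{n-g_\ell,J'}$, where $g_\ell=k-j_{\ell-1}$ is the top gap, which is exactly the leftmost factor of $M_{n,J}$, and $J'=\{j_0<\cdots<j_{\ell-1}\}$ is regarded inside $[0,k-g_\ell]$ with parameter $n-g_\ell$. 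This is a direct check that the substitutions $n\mapsto n-g_\ell$, $k\mapsto k-g_\ell$ leave every surviving upper index $n+j_{i-1}+j_i-k$ and lower index $j_i-j_{i-1}$ unchanged. Splitting the sum over $J$ according to the value $a=g_\ell$ of the top gap, and using that the signs telescope as $(-1)^{a-1}\cdot(-1)^{(k-a)+1-|J'|}=(-1)^{k+1-|J|}$, the inner sum over $J'$ becomes $\tilde C_{n-a}^{k-a}$ by the inductive hypothesis, and the peel-one recursion closes the induction.

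The second identity is handled symmetrically, now starting from Theorem~\ref{th:alternating recursion} itself. Isolating its $j=0$ term and using $\tilde C_{n+k}^0=1$ gives the peel-one form ${\stretchleftright{\llparenthesis}{\begin{matrix} n \\ k \end{matrix}}{\rrparenthesis}}=\sum_{a=1}^k(-1)^{a-1}\tilde C_{n+k-a}^a{\stretchleftright{\llparenthesis}{\begin{matrix} n-a \\ k-a \end{matrix}}{\rrparenthesis}}$. Iterating this strips one truncated-Catalan factor at a time, and as before one verifies that the surviving factor is ${\stretchleftright{\llparenthesis}{\begin{matrix} n-a \\ k-a \end{matrix}}{\rrparenthesis}}$ and that the signs telescope to $(-1)^{k+1-|J|}$, so the induction closes once one confirms that the sequence of stripped factors, read in the order they are produced, reproduces $\tilde M_{n,J}$. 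Because each derivation uses only the corresponding recursion, and conversely the explicit product formulas manifestly satisfy those recursions (reinsert the outermost factor and collapse the sum), this simultaneously proves the two formulas and their asserted equivalence with Theorem~\ref{th:alternating recursion}.

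The main obstacle is the index-and-order bookkeeping in the factorization step. First, one must check that stripping a single gap $a$ implements the clean parameter shift $(n,k)\mapsto(n-a,k-a)$ for $M_{n,J}$ and the matching shift for $\tilde M_{n,J}$, so that every upper index $n+j_{i-1}+j_i-k$ and lower index $j_i-j_{i-1}$ is preserved; this is short but unforgiving. Second, and more importantly, since we work in the free group $F$ the left-to-right order of the stripped factors is not negotiable, so one must confirm that iterating the peel-one recursion deposits the factors in exactly the order recorded in $M_{n,J}$, respectively $\tilde M_{n,J}$. In particular, which end of the composition is removed first, and hence whether the product is read with the top gap or the bottom gap outermost, is forced by the side on which the surviving binomial or Catalan factor sits in the recursion; a small check (e.g. $n=k=2$, where the identities must collapse to ${\stretchleftright{\llparenthesis}{\begin{matrix} 2 \\ 2 \end{matrix}}{\rrparenthesis}}=y_3y_1$ and $\tilde C_2^2=y_1y_1+y_2y_1$) pins down this convention. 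This ordering check is the only genuinely noncommutative point; the telescoping of signs and the vanishing of the isolated $j=0$ terms are routine.
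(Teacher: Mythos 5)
Your proof is correct and is essentially the paper's argument in unrolled form: the paper obtains both expansions by applying the chain formula for the inverse of a lower unitriangular matrix (Lemma~\ref{le:lower triangular inverse}) to the pair $L_m$, $L^-_m$ from Theorem~\ref{th:inverse gauss}, and that chain formula is precisely what your iterated peel-one recursions compute entrywise, with the same telescoping of signs. The one check you defer --- the left-to-right order of the stripped factors --- is genuinely needed and should be carried out: since isolating the $j=0$ term of either alternating identity deposits the new factor on the \emph{left}, both iterations produce products with the top-gap factor leftmost, which agrees with the printed $M_{n,J}$ but forces $\tilde M_{n,J}$ to be read in the reverse (descending) order from the one displayed; your own test case confirms this, as $\tilde C_3^1\tilde C_1^1-\tilde C_2^2=(y_1+y_2+y_3)y_1-(y_1+y_2)y_1=y_3y_1$ is the correct value of the binomial coefficient, whereas the ascending product $\tilde C_1^1\tilde C_3^1-\tilde C_2^2$ is not.
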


We prove Theorem \ref{th:Cnk via binomials} in Section \ref{subsect:3.4}.

Actually, Theorems \ref{th:det GH}, \ref{th:alternating recursion}, and \ref{th:Cnk via binomials} hint to some remarkable properties of Hankel matrices with noncommutative Catalan numbers as entries. 



For $m\in \ZZ_{\ge 0}$ define the $\ZZ_{\ge 0}\times \ZZ_{\ge 0}$ matrix  $H_m$ over $\ZZ F$ 
whose $(i,j)$-th entry is $C_{m+i+j}$, $i,j\in \ZZ_{\ge 0}$
and for each $n\ge 0$ denote by  $H_{m,n}$ the principal $[0,n]\times [0,n]$ submatrix of $H_m$. 

\begin{example}
$
H_{0,1}=\left ( \begin{matrix} C_0&C_1\\C_1&C_2\end{matrix} \right ), \
H_{1,1}=\left (\begin{matrix} C_1&C_2\\C_2&C_3\end{matrix} \right )$,
$
H_{0,2}=\left ( \begin{matrix} C_0&C_1&C_2\\C_1&C_2&C_3\\C_2&C_3&C_4\end{matrix} \right ),\ 
H_{1,2}=\left ( \begin{matrix} C_1&C_2&C_3\\C_2&C_3&C_4\\C_3&C_4&C_5\end{matrix} \right )$.
\end{example}
We refer to all $H_m$ and $H_m^n$ as {\it noncommutative Hankel-Catalan matrices} by analogy with its classical 
counterpart $\varepsilon(H_{m,n})\in Mat_{n+1,n+1}(\ZZ)$. 

We will finish the section by showing that each  $H_{m,n}$, $m\in \{0,1\}$, $n\ge 0$ admits a Gauss factorization over $\ZZ F$ involving truncated noncommutative Catalan numbers and it inverse (which is also a matrix over $\ZZ F$) is given by an interesting combinatorial formula involving our noncommutative binomial coefficients.


  




For $m\in \{0,1\}$ let $L_m$ be the lower unitriangular  $\ZZ_{\ge 0}\times  \ZZ_{\ge 0}$ matrix whose 
$(j,i)$-th entry, $0\le i\le j$, is $\tilde C_{i+j+m}^{j-i}$ and let $U_m$ be the upper triangular  $\ZZ_{\ge 0}\times  \ZZ_{\ge 0}$ matrix whose 
$(i,j)$-th entry, $0\le i\le j$, is $\overline {C_{i+j+m}^{j-i}}$). 


\begin{theorem}
\label{th:Gauss} $H_m=L_m\cdot U_m$ for each  $m\in \{0,1\}$. 
%
\end{theorem}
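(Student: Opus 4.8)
The plan is to compute the $(i,j)$ entry of the product $L_m\cdot U_m$ by hand and to recognize the result as a single instance of Theorem \ref{th:catalan via truncated catalans}. Since $L_m$ is lower triangular and $U_m$ is upper triangular, the matrix product telescopes: only indices $k$ with $k\le i$ (so that $(L_m)_{i,k}\ne 0$) and $k\le j$ (so that $(U_m)_{k,j}\ne 0$) contribute, whence
\[
(L_m U_m)_{i,j}=\sum_{k=0}^{\min(i,j)} \tilde C_{i+k+m}^{\,i-k}\cdot \overline{C_{k+j+m}^{\,j-k}}\ ,
\]
where $(L_m)_{i,k}=\tilde C_{i+k+m}^{\,i-k}$ and $(U_m)_{k,j}=\overline{C_{k+j+m}^{\,j-k}}$ are the prescribed entries, both superscripts being the (nonnegative) distance of the entry from the main diagonal.

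Next I would unfold the tilde via its definition $\tilde C_n^\ell=C_n^\ell x_{n-\ell}^{-1}$, which for $n=i+k+m$, $\ell=i-k$ gives $\tilde C_{i+k+m}^{\,i-k}=C_{i+k+m}^{\,i-k}x_{2k+m}^{-1}$. The key step is then the substitution $a=i-k$, $b=j-k$, together with $n=m+i+j$. Under it the three factors become exactly the summand of Theorem \ref{th:catalan via truncated catalans}: one checks $C_{i+k+m}^{\,i-k}=C_{n-b}^{\,a}$, $x_{2k+m}^{-1}=x_{n-a-b}^{-1}$, and $\overline{C_{k+j+m}^{\,j-k}}=\overline{C_{n-a}^{\,b}}$, using only the arithmetic identities $n-b=m+i+k$, $n-a-b=m+2k$, $n-a=m+j+k$. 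As $k$ runs over $0\le k\le\min(i,j)$ the pair $(a,b)$ runs over all $a,b\in\ZZ_{\ge 0}$ of fixed difference $a-b=i-j$ with $a+b=i+j-2k$.

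Applying Theorem \ref{th:catalan via truncated catalans} with $d=i-j$ (legitimate since $|i-j|\le i+j\le m+i+j=n$), the full sum over all $a,b\ge 0$ with $a+b\le n$ and $a-b=i-j$ equals $C_n=C_{m+i+j}$, the $(i,j)$ entry of $H_m$. The point on which everything hinges—and the step I expect to be the real obstacle—is matching the two summation ranges exactly. Triangularity forces $k\le\min(i,j)$, hence $a+b=i+j-2k\le i+j$, whereas the theorem sums over $a+b\le n=m+i+j$. Since $a-b=i-j$ forces $a+b\equiv i+j\pmod 2$, the largest admissible value $a+b=i+j$ is reached precisely when $m\in\{0,1\}$, so the two ranges coincide in exactly these cases. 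This is why the statement restricts to $m\in\{0,1\}$: for $m\ge 2$ the cited sum would acquire extra (nonzero) terms with $a+b\in\{i+j+2,\dots\}$ that the triangular product does not produce, and the factorization would fail; the bulk of the write-up is therefore this boundary/parity bookkeeping, the substantive content being entirely imported from Theorem \ref{th:catalan via truncated catalans}.
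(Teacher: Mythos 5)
Your proposal is correct and follows essentially the same route as the paper: compute $(L_mU_m)_{i,j}$, substitute $a=i-k$, $b=j-k$, $n=m+i+j$, $d=i-j$, and invoke Theorem \ref{th:catalan via truncated catalans}. Your parity/boundary check that the summation ranges coincide exactly when $m\in\{0,1\}$ is a worthwhile detail that the paper's proof leaves implicit.
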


We prove Theorem \ref{th:Gauss} in Section \ref{subsect:3.3}.

\begin{remark} 
\label{rem:Gauss} 
A classical version of this result, 
$\varepsilon(H_m)=\varepsilon(L_m)\cdot \varepsilon(U_m)$, was established  in \cite{Ai}.

\end{remark}

Theorem \ref{th:Gauss} and \cite[Theorem 4.9.7]{GGRW} imply the following immediate corollary.

\begin{corollary} 
\label{cor:quasidet}
$C_{m+i+j}^{j-i}$ equals the quasidetermiant 
$\left |\begin{matrix}
C_m&C_{m+1}&\dots&C_{m+i}\\
C_{m+1}&C_{m+2}&\dots&C_{m+i+1}\\
 & &\dots& & \\
C_{m+i-1}&C_{m+i}&\dots&C_{m+2i-1}\\
C_{m+j}&C_{m+j+1}&\dots&\boxed {C_{m+i+j}}
\end{matrix} \right |$ for $0\le i\le j$, $m\in \{0,1\}$ (see \cite{gr,gr2} for notation). In particular, 
\begin{equation}
\label{eq:principal quasi-minor}
\left |\begin{matrix}
C_m&C_{m+1}&\dots &C_{m+n}\\
C_{m+1}&C_{m+2}&\dots &C_{m+n+1}\\
 & & \dots & \\
C_{m+n}&C_{m+n+1}&\dots &\boxed {C_{m+2n}}\end{matrix}\right |=x_{m+2n}
\end{equation} for all $n\in \ZZ_{\ge 0}$,  $m\in \{0,1\}$.

\end{corollary}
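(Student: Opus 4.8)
The plan is to read off both assertions directly from the factorization $H_m = L_m U_m$ of Theorem \ref{th:Gauss}, using the quasideterminant formula for Gauss decompositions \cite[Theorem 4.9.7]{GGRW}. Since only finite submatrices enter, I would first pass to leading principal blocks: because $L_m$ is lower triangular and $U_m$ is upper triangular, for every $N$ one has $H_m^N = L_m^N U_m^N$, so it suffices to compute inside a large enough finite matrix $H_m^N$ with $N \ge \max(i,j)$.

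Next I would identify the relevant submatrix. The $(i+1)\times(i+1)$ array whose quasideterminant appears in the corollary is exactly the submatrix $B$ of $H_m$ on rows $\{0,1,\dots,i-1,j\}$ and columns $\{0,1,\dots,i\}$, boxed in the bottom-right corner, i.e. at the position $(j,i)$. Restricting $H_m = L_m U_m$ to these rows and columns gives a factorization $B = L'U'$, where $U'$ is the leading $(i+1)\times(i+1)$ upper-triangular block of $U_m$ and $L'$ has block form $\begin{pmatrix} L_1 & 0 \\ \lambda & \ell_{ji}\end{pmatrix}$ with $L_1$ lower unitriangular; here $\ell_{ji}$ is the $(j,i)$ entry of $L_m$. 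A short block computation of the bottom-right quasideterminant of $B = L'U'$ then collapses the cross term and leaves $\ell_{ji}\,u_{ii}$, where $u_{ii}$ is the $i$-th diagonal entry of $U_m$. This is precisely the Gauss-decomposition formula of \cite[Theorem 4.9.7]{GGRW}.

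It then remains to evaluate $\ell_{ji}\,u_{ii}$. By definition $u_{ii} = \overline{C_{2i+m}^0} = \overline{x_{2i+m}} = x_{2i+m}$ (using $C_n^0 = x_n$ and $\overline{x_k}=x_k$), while the $(j,i)$ entry of $L_m$ is $\ell_{ji} = \tilde C_{i+j+m}^{j-i} = C_{i+j+m}^{j-i}\,x_{(i+j+m)-(j-i)}^{-1} = C_{m+i+j}^{j-i}\,x_{2i+m}^{-1}$. Hence the quasideterminant equals $C_{m+i+j}^{j-i}\,x_{2i+m}^{-1} x_{2i+m} = C_{m+i+j}^{j-i}$, which is the first assertion. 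Equation \eqref{eq:principal quasi-minor} is the special case $i=j=n$: there the rows $\{0,\dots,n-1,n\}$ and columns $\{0,\dots,n\}$ give the full leading block $H_m^n$ boxed at the bottom right, and the formula yields $C_{m+2n}^{\,0} = x_{m+2n}$.

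The only genuine care needed is in reconciling the index conventions of \cite{GGRW} with ours (the ranges of row and column labels and the placement of the box) and in checking that the diagonal entries $u_{ii} = x_{2i+m}$ are invertible in $\ZZ F$, which guarantees that the quasideterminants are defined; both are routine. The substantive input — that the Hankel-Catalan matrix factors through truncated noncommutative Catalan numbers — is already provided by Theorem \ref{th:Gauss}, so the corollary is indeed immediate once the above matching is carried out.
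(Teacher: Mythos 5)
Your proposal is correct and follows exactly the paper's route: the corollary is deduced from the Gauss factorization $H_m = L_m U_m$ of Theorem \ref{th:Gauss} together with the quasideterminant formula for Gauss decompositions in \cite[Theorem 4.9.7]{GGRW}, with the evaluation $\ell_{ji}u_{ii} = \tilde C_{i+j+m}^{j-i}\,x_{2i+m} = C_{m+i+j}^{j-i}$ being the only computation needed. The paper states this as an immediate consequence without spelling out the block verification; your write-up simply supplies those routine details.
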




\begin{remark} In fact, \eqref{eq:principal quasi-minor} is noncommutative generalization 
of the well-known fact that $\det(\varepsilon(H_{0,n}))=\det(\varepsilon(H_{1,n}))=1$ for $n\ge 0$. Moreover similarly to the classical case, noncommutative Catalan numbers are uniquely determined by equations \eqref{eq:principal quasi-minor} for $n\in \ZZ_{\ge 0}$,  $m\in \{0,1\}$.
\end{remark}

\begin{remark} Noncommutative Hankel quasideterminants were introduced in \cite{GCLLRT} in the context of inversion of noncommutative power series. In fact, \cite[Corollary 8.3]{GCLLRT} asserts that such an inverse can be expressed via continued fractions involving such quasideterminants of the coefficients of the series in question. This correlates with Remark \ref{rem:DifK} above.

\end{remark}


%


For $m\in \{0,1\}$ let $L^-_m$ be the lower unitriangular  $\ZZ_{\ge 0}\times  \ZZ_{\ge 0}$ matrix whose 
$(j,i)$-th entry, $0\le i\le j$, is $(-1)^{i+j}{\stretchleftright{\llparenthesis}{\begin{matrix} i+j+m \\ j-i \end{matrix}}{\rrparenthesis}}$ and let $U^-_m$ be the upper triangular  $\ZZ_{\ge 0}\times  \ZZ_{\ge 0}$ matrix whose 
$(i,j)$-th entry, $0\le i\le j$, is $\displaystyle{(-1)^{i+j}\overline{{\stretchleftright{\llparenthesis}{\begin{matrix} i+j+m \\ j-i \end{matrix}}{\rrparenthesis}}}x_{2j+m}^{-1}}$.


For any $\ZZ_{\ge 0}\times \ZZ_{\ge 0}$ matrix $M$ denote by $M|_n$ the principal $(n+1)\times (n+1)$-submatrix of $M$ (e.g., $H_{m,n}=H_m|_n$).

\begin{theorem} 
\label{th:inverse gauss}
$(U_m)^{-1}=U^-_m$ and $(L_m)^{-1}=L^-_m$,  hence $(H_{m,n})^{-1}=U^-_m|_n\cdot L^-_m|_n$ 
for $m\in \{0,1\}$, $n\ge 1$.
\end{theorem}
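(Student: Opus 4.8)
The plan is to prove the two inverse identities $(L_m)^{-1}=L_m^-$ and $(U_m)^{-1}=U_m^-$ separately; the formula for $(H_m^n)^{-1}$ then drops out of the Gauss factorization $H_m=L_mU_m$ of Theorem \ref{th:Gauss}. First I would record that all four matrices are triangular with invertible diagonal: $L_m,L_m^-$ are lower unitriangular, while $U_m,U_m^-$ are upper triangular with diagonal entries $x_{2i+m}$ and $x_{2i+m}^{-1}$, which are units in $\ZZ F$. Hence each is invertible, its inverse is triangular of the same type and is computed by a locally finite Neumann series, so to identify an inverse it suffices to check a single one-sided product entrywise, each entry being a finite sum.

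For $(L_m)^{-1}=L_m^-$ I would compute the $(j,i)$-entry of $L_mL_m^-$ for $j\ge i$. Triangularity collapses the sum to the range $i\le \ell\le j$, giving $\sum_{\ell=i}^{j}\tilde C_{j+\ell+m}^{\,j-\ell}(-1)^{\ell+i}{\stretchleftright{\llparenthesis}{\begin{matrix} \ell+i+m \\ \ell-i \end{matrix}}{\rrparenthesis}}$. The diagonal case $j=i$ keeps only $\ell=i$ and equals $\tilde C_{2i+m}^0\cdot 1=1$. For $j>i$ the substitution $\ell=j-j'$ rewrites the sum as $(-1)^{i+j}\sum_{j'=0}^{k}(-1)^{j'}\tilde C_{n+k-j'}^{\,j'}{\stretchleftright{\llparenthesis}{\begin{matrix} n-j' \\ k-j' \end{matrix}}{\rrparenthesis}}$ with $n=i+j+m$ and $k=j-i$; since $0<k\le n$ (as $2i+m\ge 0$), this vanishes by Theorem \ref{th:alternating recursion}. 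Thus $L_mL_m^-=I$. Recognizing the off-diagonal alternating sum as a literal instance of Theorem \ref{th:alternating recursion} is the crux of the whole argument.

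For $(U_m)^{-1}=U_m^-$ I would avoid a second alternating-sum identity and instead exploit a conjugate-transpose symmetry between $U_m$ and $L_m$. From $\tilde C_n^k=C_n^kx_{n-k}^{-1}$ and $\overline{x_\ell}=x_\ell$ one gets $\overline{C_n^k}=x_{n-k}\,\overline{\tilde C_n^k}$; with $n=i+j+m$, $k=j-i$ this reads $\overline{C_{i+j+m}^{\,j-i}}=x_{2i+m}\,\overline{\tilde C_{i+j+m}^{\,j-i}}$. Writing $M^\ast$ for the bar-transpose $(M^\ast)_{i,j}=\overline{M_{j,i}}$, which is an anti-automorphism of matrices so that $(AB)^\ast=B^\ast A^\ast$ and $(M^\ast)^{-1}=(M^{-1})^\ast$, the displayed identity says exactly $U_m=D\,L_m^\ast$ with $D=\operatorname{diag}(x_{2i+m})_{i\ge 0}$. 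Hence $U_m^{-1}=(L_m^\ast)^{-1}D^{-1}=(L_m^{-1})^\ast D^{-1}=(L_m^-)^\ast D^{-1}$, and a direct entrywise check shows that $(L_m^-)^\ast D^{-1}$ has $(i,j)$-entry $(-1)^{i+j}\overline{{\stretchleftright{\llparenthesis}{\begin{matrix} i+j+m \\ j-i \end{matrix}}{\rrparenthesis}}}\,x_{2j+m}^{-1}$, i.e. equals $U_m^-$.

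Finally I would assemble the submatrix statement. Because $L_m$ is lower and $U_m$ upper triangular, passing to principal submatrices commutes with the product and with inversion: $H_m^n=(L_mU_m)|_n=L_m|_n\,U_m|_n$, while $(U_m|_n)^{-1}=(U_m^{-1})|_n$ and $(L_m|_n)^{-1}=(L_m^{-1})|_n$. Therefore $(H_m^n)^{-1}=(U_m|_n)^{-1}(L_m|_n)^{-1}=U_m^-|_n\cdot L_m^-|_n$, as claimed. I expect the only genuine obstacle to be the bookkeeping in the re-indexing of the $L_mL_m^-$ sum, so that it matches precisely the hypotheses $0<k\le n$ of Theorem \ref{th:alternating recursion}; once the relation $U_m=D\,L_m^\ast$ is observed, the $U$-part and the final assembly are purely formal.
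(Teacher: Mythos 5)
Your proof is correct and takes essentially the same route as the paper: the crux in both is that, after the re-indexing $\ell=j-j'$, $n=i+j+m$, $k=j-i$, the off-diagonal entries of $L_mL_m^-$ become exactly the alternating sums of Theorem~\ref{th:alternating recursion} (and the hypothesis $0<k\le n$ is satisfied since $2i+m\ge 0$). The only difference is cosmetic: where the paper dismisses the $U$-part as ``identical,'' you make the underlying symmetry explicit through the relation $U_m=D\,L_m^\ast$ with $D=\operatorname{diag}(x_{2i+m})$, which is a clean packaging of the same bar-anti-automorphism argument.
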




\begin{remark} Similar to Remark \ref{rem:Gauss} the classical version of this result, 
$\varepsilon(H_{m,n})^{-1}=\varepsilon(L^-_m|_n)\cdot \varepsilon(U^-_m)|_n$, seems to be new.

\end{remark}

Computation of $H_m^{-1}$ for $m\ge 2$ is a more challenging task, which we will perform elsewhere.


\section{Proofs of main results}

\label{sect:proofs}

\subsection{Proof of Propositions 
\ref{pr:recursion C}, \ref{pr:recursion underline C}, \ref{pr:recursion Cnk} and Theorems \ref{th:catalan via truncated catalans}, \ref{th:det GH}}

\label{subsect:3.1} 




%
%

We start with a proof of Proposition \ref{pr:recursion Cnk}. Then specializations will lead to Propositions
\ref{pr:recursion C} and \ref{pr:recursion underline C}.

\medskip\noindent
\textbf {Proof of Proposition \ref{pr:recursion Cnk}}.
Prove (a) first. Denote by ${\bf J}_n^k$ the set of all sequences $\jj=(j_1,\ldots,j_k)\in \ZZ^k$ such that $j_1\le \ldots \le j_k\le n$ and $j_1\ge 1,\ldots,j_k\ge k$. 

For each $P\in \PP_n^k$ and $s\in [1,k]$ denote by $j_s(P)$ the minimum of $x$-coordinates of all points in $P$ whose $y$-coordinate is $s$. 
For each $\jj=(j_1,\ldots,j_k)\in \ZZ^k$ with $j_s\ge s$, $s\in [1,k]$ we abbreviate $y_\jj=y_{j_1}y_{j_2-1}\dots y_{j_k-k+1}$.

The following is immediate.

\begin{lemma} 
\label{le:bijection paths sequences}
For all $k,n\in \ZZ_{\ge 0}$, $k\le n$ one has:  

(a) The assignments $P\mapsto \jj(P):=(j_1(P),\ldots,j_k(P))$ defines a bijection $\PP_n^k\widetilde \to {\bf J}_n^k$.

(b)  For each $P\in \PP_n^k$ we have  $M_Px_{n-k}^{-1}=y_{\jj(P)}$.
\end{lemma}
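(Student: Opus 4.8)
The plan is to encode each Catalan path by the sequence of its leftmost lattice points at each height, and then to read off $M_P$ by telescoping the contributions of the corners.

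For part (a), I would argue as follows. For $P\in\PP_n^k$ and $0\le s\le n$ the points of $P$ at height $s$ form a horizontal segment whose left endpoint is exactly $j_s(P)=a_s$ (with $a_0=0$). Monotonicity of the path gives $a_s\le a_{s+1}$, the Catalan condition $c(p)\ge 0$ forces $a_s\ge s$ (since every $(x,s)\in P$ has $x-s\ge 0$), and $a_n=n$. I would then show that $P\in\PP_n^k$ is equivalent to $a_{k+1}=n$: the rightmost southeast corner is $(n,y_0)$ where $y_0$ is the height at which the path first reaches $x=n$, and $y_0\le k$ is the same as $a_{k+1}=\cdots=a_n=n$. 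Hence $P$ is determined by $(a_1,\ldots,a_k)$, whose constraints $1\le s\le a_s$, $a_1\le\cdots\le a_k\le n$ are precisely the inequalities defining ${\bf J}_n^k$. Conversely any $\jj\in{\bf J}_n^k$, extended by $a_s=n$ for $s>k$, reconstructs a unique path in $\PP_n^k$, and the two maps are mutually inverse; this is the claimed bijection.

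For part (b), I would list the corners of $P$ in path order. Since the path must begin with a rightward step (otherwise $c<0$ immediately), the corners alternate SE, NW, SE, $\ldots$ and end with the terminal SE corner $(n,y_0)$. The corners organize into maximal vertical runs: a run rising from a SE corner $(c,s')$ to a NW corner $(c,s'')$ contributes $x_{c-s'}x_{c-s''}^{-1}$ to $M_P$. The key computation is that along such a run $a_{s'+1}=\cdots=a_{s''}=c$, so
$$
\prod_{s=s'+1}^{s''} y_{a_s-(s-1)}=\prod_{s=s'+1}^{s''} x_{c-(s-1)}x_{c-s}^{-1}=x_{c-s'}x_{c-s''}^{-1},
$$
which telescopes to exactly that run's contribution. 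Assembling all runs, and noting that the terminal SE corner at $(n,y_0)$ provides a trailing factor $x_{n-y_0}$ which, together with the normalization $x_{n-k}^{-1}$, produces the remaining factors $y_{n-y_0}\cdots y_{n-k+1}=\prod_{s=y_0+1}^{k}y_{a_s-(s-1)}$, yields $M_P=y_{j_1}y_{j_2-1}\cdots y_{j_k-k+1}\cdot x_{n-k}$, that is, $M_Px_{n-k}^{-1}=y_{\jj(P)}$.

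The routine part is (a). The main obstacle is the bookkeeping in (b): correctly matching the alternating SE/NW corner contents to the factors $y_{a_s-(s-1)}$ when a vertical run has length greater than one, and tracking the two boundary contributions (the initial SE corner at $(a_1,0)$ and the terminal SE corner at $(n,y_0)$) so that the telescoping closes up and the normalizing factor $x_{n-k}^{-1}$ lands correctly.
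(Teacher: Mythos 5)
Your proof is correct. The paper offers no argument for this lemma (it is stated as ``immediate''), and your verification --- encoding a path by the leftmost lattice point $a_s=j_s(P)$ at each height, characterizing $\PP_n^k$ by $a_{k+1}=n$, and telescoping the factors $y_i=x_ix_{i-1}^{-1}$ along each vertical run so that the trailing $x_{n-y_0}$ absorbs the normalization $x_{n-k}^{-1}$ --- is exactly the computation the authors leave to the reader.
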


Using Lemma \ref{le:bijection paths sequences}(b), we obtain $\tilde C_n^k=\sum\limits_{\jj\in {\bf J}_n^k} y_\jj$
and thus finish the proof of (a).

Prove (b). It is easy to see that ${\bf J}_n^k={\bf J}_{n-1}^k\sqcup ({\bf J}_n^{k-1},n)$.
Therefore, 
$$\tilde C_n^k=\sum\limits_{\jj\in {\bf J}_n^k} y_\jj=\sum\limits_{\jj\in {{\bf J}_n^k}} y_\jj+\sum\limits_{\jj\in ({\bf J}_n^{k-1},n)} y_\jj=\tilde C_{n-1}^k+\tilde C_n^{k-1}y_{n+1-k}\ .$$

This proves (b).

To prove (c) we need the following result.

\begin{lemma} 
\label{le:Jdecomposition} 
${\bf J}_{n+1}^k=\bigsqcup\limits_{i=0}^k {\bf J}_i^i \times T^{i+1}({\bf J}_{n-i}^{k-i})$ 
for all $k,n\in \ZZ_{\ge 0}$, $0\le k\le n$, where $T=T_r:\ZZ^r\to \ZZ^r$, $r\ge 1$ is the translation given by $x\mapsto x+\underbrace{(1,\ldots,1)}_r$. 
\end{lemma}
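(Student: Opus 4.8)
The plan is to realize this as the ``last diagonal contact'' decomposition of the paths in $\PP_{n+1}^k$, but carried out entirely on the index sequences via an explicit bijection. Since every $\jj=(j_1,\ldots,j_k)\in {\bf J}_{n+1}^k$ satisfies $j_s\ge s$, the condition $j_s\le s$ is equivalent to $j_s=s$, so (with the convention $j_0:=0$) I would set
$$i=i(\jj):=\max\{s\in[0,k]:\ j_s=s\}\ ,$$
which is well defined because $s=0$ always lies in the set and $s\le k$. This $i$ is exactly the summation index on the right-hand side, and the bijection will split $\jj$ after its $i$-th entry.

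The first step is to check that the two halves of $\jj$ land in the claimed factors. For the head $(j_1,\ldots,j_i)$, monotonicity gives $j_s\le j_i=i$ for $s\le i$ while $j_s\ge s$ is inherited, so $(j_1,\ldots,j_i)\in {\bf J}_i^i$. For the tail, maximality of $i$ forces $j_s>s$, i.e. $j_s\ge s+1$, for every $s$ with $i<s\le k$; writing $s=i+t$ and setting $j''_t:=j_{i+t}-(i+1)$, this yields $j''_t\ge t$, monotonicity is inherited, and $j''_{k-i}=j_k-(i+1)\le (n+1)-(i+1)=n-i$. Hence $(j''_1,\ldots,j''_{k-i})\in {\bf J}_{n-i}^{k-i}$ and the tail is $T^{i+1}$ of it, so $\jj\mapsto \big((j_1,\ldots,j_i),\,T^{i+1}(j''_1,\ldots,j''_{k-i})\big)$ maps ${\bf J}_{n+1}^k$ into the disjoint union.

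For the inverse I would, given $i\in[0,k]$ and $(\jj',\jj'')\in {\bf J}_i^i\times {\bf J}_{n-i}^{k-i}$, form the concatenation $\jj=(\jj',T^{i+1}(\jj''))$ and verify $\jj\in {\bf J}_{n+1}^k$: the bounds $j_s\ge s$ and $j_k\le n+1$ follow exactly as above, and monotonicity holds across the junction because the last entry of $\jj'$ is $j_i=i$ while the first entry of $T^{i+1}(\jj'')$ is at least $i+2$. The decisive point, which simultaneously gives disjointness, is that this construction recovers the index: since $j_i=i$ and $j_s\ge s+1$ for every $s>i$, one has $i(\jj)=i$. Thus the two maps are mutually inverse and the images for distinct $i$ are disjoint, establishing the decomposition. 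The only place requiring genuine care is confirming that $i(\jj)$ is precisely the last diagonal contact (so that the head/tail split is forced and the pieces are disjoint); everything else is routine inequality bookkeeping, and I expect no serious obstacle.
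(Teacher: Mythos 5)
Your proof is correct and follows essentially the same route as the paper: both arguments partition ${\bf J}_{n+1}^k$ according to the largest index $i$ with $j_i=i$ (the ``last diagonal contact'') and identify the fibre over $i$ with ${\bf J}_i^i\times T^{i+1}({\bf J}_{n-i}^{k-i})$. Your write-up simply makes explicit the inequality checks and the inverse map that the paper leaves as immediate.
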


\begin{proof} For each $\jj=(j_1,\ldots,j_k)\in {\bf J}_{n+1}^k$ denote by $i_\jj$ the largest $i\in [1,k]$ such that $j_i=i$ and set $i(\jj):=0$ if such an $i$ does not exist.
This implies that, $\{\jj\in {\bf J}_{n+1}^k:i_\jj=i\}={\bf J}_i^i\times T^{i+1}({\bf J}_{n-i-1}^{k-i})$ for all $i\in [0,k]$ (the first factor is empty for $i=0$).

The lemma is proved.
\end{proof}

Taking into account that for $\jj=(\jj',T^{i+1}(\jj''))\in {\bf J}_i^i \times T^{i+1}({\bf J}_{n-i}^{k-i})$, 
we have $y_\jj=y_{\jj'}T(y_{\jj''})$, we obtain:

$$\tilde C_{n+1}^k=\sum\limits_{\jj\in {\bf J}_{n+1}^k} y_\jj=\sum\limits_{i\in [0,k],\jj'\in {\bf J}_i^i,\jj''\in {\bf J}_{n-i}^{k-i}} 
y_{\jj'}T(y_{\jj''})=\sum_{i=0}^k \tilde C_i^iT(\tilde C_{n-i}^{k-i})\ .$$
This proves (c).

Proposition \ref{pr:recursion Cnk} is proved. \endproof

\medskip

\noindent
{\bf Proof of Theorem \ref{th:det GH}}. Applying  $\chi_q$ to $\tilde C_{n+1}^k$ given by Proposition \ref{pr:recursion Cnk}(c)  and using the fact that $\chi_q(T(y))=q^{d}\chi_q(y)$ for any  homogeneous noncommutative polynomial of degree $d$ in $y_1,y_2,\ldots$, we obtain:
$$\chi_q(\tilde C_{n+1}^k)=\sum\limits_{i=0}^k q^{k-i}\chi_q(\tilde C_i^i)\chi_q(\tilde C_{n-i}^{k-i})$$
for all $0\le k\le n$. 
In view of \cite[Equation (3.41)]{H} and  that $F_{n,k}(q,t)=H_{n,k}(q,t)=t^{n-k}q^{\frac{k(k-1)}{2}}c_{n-1}^{n-k}(q,t)$ for all $0\le k<n$, we obtain same recursion
$c_{n+1}^k=\sum\limits_{i=0}^k c_i^i(q,1) q^{k-i}c_{n-i}^{k-i}(q,1)$
for all $0\le k\le n$.
Using this and taking into account that $\chi_q(\tilde C_{n+1}^n)=\chi_q(\tilde C_{n+1}^{n+1})$, 
%
%
%
%
we conclude that $\chi_q(\tilde C_n^k)=c_n^k(q,1)$ for all $0\le k\le n$.

The theorem is proved. 
\endproof

\medskip

\noindent
{\bf Proof of Proposition \ref{pr:recursion C}}. Indeed, taking into account that $C_r=\tilde C_r^r\cdot x_0=\tilde C_r^{r-1}y_1x_0$ for all $r\ge 1$, we see that the first identity \eqref{eq:recursion C} is equivalent to
$\tilde C_{n+1}^n=\sum_{k=0}^n \tilde C_k^kT(\tilde C_{n-k}^{n-k})$
which coincides with the assertion of Proposition \ref{pr:recursion Cnk}(c) with $k=n$.

The second identity \eqref{eq:recursion C} follows from the first one and Proposition \ref{pr:bar invariance} by applying the anti-involution $\overline{\cdot}$.
%
%
%

Proposition \ref{pr:recursion C} is proved.
\endproof

\smallskip
\noindent
{\bf Proof of Proposition \ref{pr:recursion underline C}}. We say that $x\in F$ is alternating if it is of the form 
$x_{i_1}x_{i_2}^{-1}x_{i_3}\dots x_{i_{s-1}}^{-1}x_{i_s}$ for some $i_1,\ldots,i_s\in \ZZ_{\ge 0}$ and denote by $F^{alt}$ the set of all alternating elements in $F$. We also denote by $\ZZ F^{alt}$ the $\ZZ$-linear span of $F^{alt}$ in $\ZZ F$.
We need the following fact.

\begin{lemma} 
\label{le:sigmaT}
$\sigma (T(x))=x_0\sigma(x)x_1$ for all  $x\in \ZZ F^{alt}$. 
\end{lemma}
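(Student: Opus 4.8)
The plan is to reduce the statement to a single alternating word and then verify it by a short computation with a normal form (which, if one prefers, can be packaged as an induction on length). Since $\sigma$ is a ring homomorphism, $T$ is an endomorphism, and $z\mapsto x_0zx_1$ is $\ZZ$-linear, the assignment $x\mapsto \sigma(T(x))-x_0\sigma(x)x_1$ is $\ZZ$-linear on $\ZZ F$; hence it suffices to check the identity on the spanning set $F^{alt}$, i.e. for a single alternating element $x=x_{i_1}x_{i_2}^{-1}x_{i_3}\cdots x_{i_s}$ with $s$ odd.

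For such an $x$ I would first put $\sigma(x)$ into reduced form in $x_0,x_1$. Using $\sigma(x_k)=x_0^kx_1^k$ and cancelling adjacent powers at each junction, one gets $\sigma(x)=x_0^{i_1}x_1^{i_1-i_2}x_0^{i_3-i_2}x_1^{i_3-i_4}\cdots x_0^{i_s-i_{s-1}}x_1^{i_s}$. The crucial observation — and the reason the hypothesis of alternation is exactly right — is visible from the two junction types $\sigma(x_{k+1})\sigma(x_{k'+1})^{-1}=x_0^{k+1}x_1^{k-k'}x_0^{-(k'+1)}$ and $\sigma(x_{k+1})^{-1}\sigma(x_{k'+1})=x_1^{-(k+1)}x_0^{k'-k}x_1^{k'+1}$: at every internal junction the two $+1$ shifts produced by $T$ cancel, so only the extreme leftmost $x_0$-exponent and the extreme rightmost $x_1$-exponent increase by one when each $i_j$ is replaced by $i_j+1$. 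Thus $\sigma(T(x))=x_0^{i_1+1}x_1^{i_1-i_2}\cdots x_0^{i_s-i_{s-1}}x_1^{i_s+1}=x_0\,\sigma(x)\,x_1$, as required. The same cancellation can be organised as an induction on $s$: the base case $s=1$ is immediate, and for $s\ge 3$ one writes $x=x_{i_1}x_{i_2}^{-1}x'$ with $x'\in F^{alt}$ of length $s-2$, applies $\sigma$ together with the inductive hypothesis $\sigma(T(x'))=x_0\sigma(x')x_1$, and checks that both sides collapse to $x_0^{i_1+1}x_1^{i_1-i_2}x_0^{-i_2}\sigma(x')x_1$.

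I do not expect a genuine obstacle here, since the argument is a telescoping of the boundary exponents; the one point deserving care is the reduction to the spanning set. Because distinct alternating expressions can denote the same group element (for instance $x_ix_i^{-1}x_j=x_j$ after reduction), I would stress that $\sigma$ and $T$ are well defined on $F$, so both sides of the claimed identity depend only on the group element $x$ and not on the chosen alternating representation; it therefore suffices to verify the identity using any one alternating expression for $x$, which is precisely what the computation above does.
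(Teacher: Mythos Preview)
Your proof is correct and follows essentially the same approach as the paper: reduce by $\ZZ$-linearity to a single alternating word $x=x_{i_1}x_{i_2}^{-1}\cdots x_{i_s}$, expand $\sigma(T(x))$ as the product of the shifted blocks $(x_0^{i_j+1}x_1^{i_j+1})^{\pm 1}$, and observe that the internal $+1$'s telescope so that only an extra $x_0$ on the left and $x_1$ on the right survive. Your additional inductive packaging and the remark on well-definedness (that $\sigma,T$ are defined on $F$, so the identity is independent of the chosen alternating representative) are correct embellishments; the only minor quibble is that what you call the ``reduced form'' of $\sigma(x)$ need not be reduced in the free-group sense when some $i_j=i_{j\pm 1}$, but this does not affect the argument since the displayed expression is valid in $F_2$ regardless.
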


\begin{proof} We first prove the assertion for all $x\in F^{alt}$. Indeed,let $x=x_{i_1}x_{i_2}^{-1}x_{i_3}\dots x_{i_{s-1}}^{-1}x_{i_s}$ for some $i_1,i_2,\dots,i_s\ge 0$.
We have
$
\sigma (T(x))=\sigma (x_{i_1+1}x_{i_2+1}^{-1}x_{i_3+1}\dots x_{i_{s-1}+1}^{-1}x_{i_s+1})
$
$$
=(x_0^{i_1+1}x_1^{i_1+1})(x_0^{i_2+1}x_1^{i_2+1})^{-1}(x_0^{i_3+1}x_1^{i_3+1})\dots (x_0^{i_{s-1}+1}x_1^{i_{s-1}+1})^{-1}(x_0^{i_s+1}x_1^{i_s+1})
$$
$$
=x_0\cdot (x_0^{i_1}x_1^{i_1})(x_0^{i_2}x_1^{i_2})^{-1}(x_0^{i_3}x_1^{i_3})\dots (x_0^{i_{s-1}}x_1^{i_{s-1}})^{-1}(x_0^{i_s}x_1^{i_s})\cdot x_1
=x_0\sigma (x)x_1\ .
$$

By linearity of $\sigma$ we obtain the assertion for all $x\in \ZZ F^{alt}$.

The lemma is proved. 
\end{proof}

Since each $C_k$ belongs to $\ZZ F^{alt}$, Lemma \ref{le:sigmaT} implies that $\sigma (T(C_k))=x_0\sigma(C_k)x_1=x_0\underline C_k x_1$ for all $k\ge 0$. 
Using this and applying $\sigma$ to the first identity
\eqref{eq:recursion C}, we obtain \eqref{eq:recursion underline C}.

Proposition \ref{pr:recursion underline C} is proved.
\endproof

\medskip
\noindent
{\bf Proof of Theorem \ref{th:catalan via truncated catalans}}. In the notation of the proof of Proposition \ref{pr:recursion Cnk}, for all $0\le k\le n$ denote by 
$\overline {{\bf J}_n^k}$ the set of all $\jj=(j_1,\ldots,j_n)\in {\bf J}_n^n$ such that $j_1\ge n-k$. 

\begin{lemma}
\label{le:C_n^k} 
$\overline C_n^k\cdot x_0^{-1}=\sum\limits_{\jj\in \overline {{\bf J}_n^k}} y_\jj$ for all $0\le k\le n$.

\end{lemma}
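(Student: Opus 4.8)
The plan is to prove the lemma by setting up an explicit bijection between $\PP_n^k$ and the combinatorial set ${\bf J}_{n,k}$, then tracking what happens to the associated monomials under the $\overline{\cdot}$ involution and right-multiplication by $x_0^{-1}$. Recall that by Proposition \ref{pr:recursion Cnk}(a) and the bijection $\PP_n^n \widetilde\to {\bf J}_n^n$ of Lemma \ref{le:bijection paths sequences}(a), the sequence $\jj(P)=(j_1(P),\ldots,j_n(P))$ records, for each row height $s$, the minimal $x$-coordinate of a point of $P$ at height $s$. The defining condition $j_1\ge n-k$ for ${\bf J}_{n,k}\subseteq {\bf J}_n^n$ should correspond precisely to the constraint that singles out $\PP_n^k$, namely that the rightmost southeast corner has $y$-coordinate at most $k$; so first I would verify that under the involution $s_n(x,y)=(n-y,n-x)$ the image $s_n(\PP_n^k)$ is exactly the set of Catalan paths whose sequence $\jj$ lies in ${\bf J}_{n,k}$.

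The key computation will be to combine equation \eqref{eq:barMP}, which gives $\overline M_P = M_{s_n(P)}$, with Lemma \ref{le:bijection paths sequences}(b), which gives $M_{P'} x_{n-n}^{-1}=M_{P'}x_0^{-1}=y_{\jj(P')}$ for $P'\in \PP_n^n=\PP_n$. Concretely, I would write
\begin{equation}
\overline{C_n^k}\cdot x_0^{-1}=\sum_{P\in \PP_n^k}\overline{M_P}\cdot x_0^{-1}=\sum_{P\in \PP_n^k} M_{s_n(P)}\,x_0^{-1}=\sum_{P'\in s_n(\PP_n^k)} M_{P'}\,x_0^{-1}=\sum_{P'\in s_n(\PP_n^k)} y_{\jj(P')}\ ,
\end{equation}
where the last equality uses that every $P'\in s_n(\PP_n^k)$ lies in $\PP_n=\PP_n^n$ so Lemma \ref{le:bijection paths sequences}(b) applies with $k=n$. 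It then remains to identify $\{\jj(P'):P'\in s_n(\PP_n^k)\}$ with ${\bf J}_{n,k}$, which reduces the whole statement to the combinatorial claim that $s_n$ carries the condition ``rightmost southeast corner has height $\le k$'' to the condition $j_1\ge n-k$.

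I expect the main obstacle to be exactly this last identification: translating the corner/height condition defining $\PP_n^k$ through the reflection $s_n$ into the numerical inequality $j_1\ge n-k$ on the minimal-$x$-coordinate statistic. The cleanest route is to observe that $s_n$ interchanges southeast and northwest corners and sends a corner at $(n,y)$ (the rightmost southeast corner, constrained by $y\le k$) to a corner at $(n-y,0)$; the statistic $j_1(P')$, being the minimal $x$-coordinate at height $1$, is governed by where the path first leaves the bottom edge, and one checks that $y\le k$ becomes $j_1\ge n-k$. Once this correspondence is nailed down—most safely by describing both the path and its reflection directly in terms of their jump sequences—the summation rewriting above finishes the proof, and I would close by remarking that this lemma is the combinatorial engine behind Theorem \ref{th:catalan via truncated catalans}, since it expresses the reflected truncated Catalan numbers in the $y$-variables needed to assemble $C_n$.
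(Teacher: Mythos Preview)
Your proposal is correct and follows essentially the same route as the paper: apply \eqref{eq:barMP} to convert $\overline{M_P}$ into $M_{s_n(P)}$, then use Lemma \ref{le:bijection paths sequences}(b) with $k=n$ to rewrite $M_{s_n(P)}x_0^{-1}$ as $y_{\jj(s_n(P))}$, and finally identify $\jj(s_n(\PP_n^k))$ with ${\bf J}_{n,k}$. The paper simply asserts this last identification in one clause (``because ${\bf J}_{n,k}=\jj(s_n(\PP_n^k))$''), whereas you spell out why the condition ``rightmost southeast corner at height $\le k$'' reflects under $s_n$ to $j_1\ge n-k$; your extra detail is sound and fills in exactly what the paper leaves implicit.
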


\begin{proof}Indeed, in view of \eqref{eq:barMP}, we obtain using Lemma \ref{le:bijection paths sequences}(b):
$$\overline {C_n^k}x_0^{-1}=\sum_{P\in \PP_n^k} \overline M_P\cdot x_0^{-1}=\sum_{P\in \PP_n^k} M_{s_n(P)}\cdot x_0^{-1}=\sum_{P\in \PP_n^k} y_{\jj(s_n(P))}=\sum_{\jj\in {\overline {\bf J}_n^k}} y_{\jj}$$
because $\overline {{\bf J}_n^k}=\jj(s_n(\PP_n^k))$.

The lemma is proved.
\end{proof}

Furthermore, after multiplying by $x_0^{-1}$ on the right, the assertion of Theorem \ref{th:catalan via truncated catalans} is equivalent to:
\begin{equation}
\label{eq:tilde Cnn}
\tilde C_n^n=\sum\limits_{\substack{a,b\in \ZZ_{\ge 0}:\\a+b\leq n, a-b=d}}
\tilde C_{n-b}^a\cdot (\overline {C_{n-a}^b}x_0^{-1})
\end{equation}
for each $n\in \ZZ_{\ge 0}$ and each $d\in \ZZ$ with $|d|\le n$.


\begin{lemma} Let $d\in [1-n,n-1]$. For each   $\jj=(j_1,\ldots,j_n)\in {\bf J}_n^n$ there exists a unique $a=a(\jj,d)\in [\max(0,d),n]$ such that $j_a\le n+d-a\le j_{a+1}$ (with the convention $j_0=0$, $j_{n+1}=\infty$).

\end{lemma}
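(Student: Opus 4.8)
The plan is to reduce the double inequality $j_a \le n+d-a \le j_{a+1}$ to a single threshold-crossing statement for a strictly monotone integer sequence. First I would set $m_a := j_a + a$ for $a \in [0,n+1]$, using the stated conventions $j_0 = 0$ and $j_{n+1} = \infty$. Since $\jj \in {\bf J}_n^n$ is weakly increasing and $j_0 = 0 \le j_1$, we get $m_{a+1}-m_a = (j_{a+1}-j_a)+1 \ge 1$ for all $0 \le a \le n$, so $(m_a)_{a=0}^{n+1}$ is a \emph{strictly} increasing sequence of integers (with $m_{n+1}=\infty$) and $m_0 = 0$. The key observation is that $j_a \le n+d-a$ is equivalent to $m_a \le n+d$, while $n+d-a \le j_{a+1}$ is equivalent to $m_{a+1} \ge n+d+1$; hence the desired condition is exactly $m_a \le n+d < m_{a+1}$.

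Next I would establish existence and uniqueness of such an $a$. Because $d \in [1-n,n-1]$ (as fixed in the preceding reduction), the threshold satisfies $n+d \ge 1 > 0 = m_0$ and $n+d < \infty = m_{n+1}$. Since $(m_a)$ is strictly increasing, there is exactly one index $a \in [0,n]$ with $m_a \le n+d < m_{a+1}$, namely the largest $a$ for which $m_a \le n+d$. Strict monotonicity is precisely what forces uniqueness: any smaller index fails the right inequality and any larger index fails the left one.

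Finally I would verify the range constraint $a \in [\max(0,d),n]$. The bounds $0 \le a \le n$ are immediate. For $a \ge d$ I would invoke the right inequality $n+d-a \le j_{a+1}$ together with the fact that every entry of $\jj$ is at most $n$: if $a \le n-1$ then $j_{a+1} \le n$, so $n+d-a \le n$ forces $a \ge d$; and if $a = n$ then $a = n \ge n-1 \ge d$ directly. In either case $a \ge \max(0,d)$.

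I do not expect a genuine obstacle: the entire argument rests on the single fact that $a \mapsto j_a + a$ is strictly increasing, which follows at once from the weak monotonicity of $\jj$, and the range bookkeeping is a routine consequence of $j_s \le n$. The only point demanding minor care is handling the boundary conventions $j_0 = 0$ and $j_{n+1} = \infty$ consistently, so that the threshold $n+d$ provably lies strictly between $m_0$ and $m_{n+1}$.
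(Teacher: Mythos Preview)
Your argument is correct. The paper offers no proof of this lemma, declaring it immediate; your reparametrization $m_a := j_a + a$ cleanly reduces the double inequality to the unique threshold-crossing $m_a \le n+d < m_{a+1}$ of a strictly increasing integer sequence, and the range verification $a \ge d$ via $j_{a+1}\le n$ is sound.
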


\begin{proof} Consider graph of the linear function $y=n+d-x$ on the coordinate plain. Set $a=k$ if there exists 
$1\le k\le n$ such the point with coordinates $(k,j_k)$ is closest to the graph from the left. Otherwise
set $a=0$.
\end{proof} 

For $a\in [\max(0,d),n]$ denote by ${\bf J}_n^n(a,d)$ the set of all $\jj\in {\bf J}_n^n$ such that $a(\jj,d)=a$.

We need the following fact (in the notation of Lemmas \ref{le:Jdecomposition} and \ref{le:C_n^k}).

\begin{lemma} 
\label{le:J(a,d)}
${\bf J}_n^n(a,d)={\bf J}_{n+d-a}^a\times T^a(\overline {{\bf J}_{n-a}^{a-d}})$.

\end{lemma}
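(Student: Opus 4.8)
The plan is to realize the claimed equality as an explicit bijection obtained by \emph{cutting} each sequence at the index $a$. Recall from the preceding lemma that $\jj=(j_1,\ldots,j_n)$ lies in ${\bf J}_n^n(a,d)$ precisely when $\jj\in {\bf J}_n^n$ and $j_a\le n+d-a\le j_{a+1}$ (with the conventions $j_0=0$, $j_{n+1}=\infty$). Given such a $\jj$, I would form the initial segment $\jj'=(j_1,\ldots,j_a)$ and the shifted tail $\jj''=(j_{a+1}-a,\ldots,j_n-a)$, so that $\jj$ is recovered as the concatenation of $\jj'$ with $T^a(\jj'')=(j_{a+1},\ldots,j_n)$. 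The inverse map is simply this concatenation, and the whole lemma reduces to checking that the two factors land in the asserted sets, with all bounds matching.

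For the first factor, the inequality $j_a\le n+d-a$ together with $1\le j_1\le\cdots\le j_a$ and $j_s\ge s$ says exactly that $\jj'\in {\bf J}_{n+d-a}^a$. For the tail, writing $j'_s:=j_{a+s}-a$ for $s\in[1,n-a]$, the bounds $a+s\le j_{a+s}\le n$ become $s\le j'_s\le n-a$, so $\jj''\in {\bf J}_{n-a}^{n-a}$; and the remaining inequality $n+d-a\le j_{a+1}$ rewrites as $j'_1\ge n+d-2a=(n-a)-(a-d)$, which is precisely the extra lower bound defining ${\bf J}_{n-a,a-d}$. Conversely, concatenating any $(\jj',T^a(\jj''))$ from the product yields a sequence with $j_a\le n+d-a$ and $j_{a+1}\ge n+d-a$; these straddle the threshold $n+d-a$, which at once forces monotonicity across the cut and reproduces the defining relation $a(\jj,d)=a$, so the concatenation lies in ${\bf J}_n^n(a,d)$.

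The only point requiring care — and the step I would verify most explicitly — is the bookkeeping of the threshold $n+d-a$, which must serve simultaneously as the upper bound for the first factor and, after subtracting $a$, as the lower bound for the second. Here the inequalities $a\ge\max(0,d)$ and $2a\le n+d$ (the latter following from $j_a\ge a$ and $j_a+a\le n+d$) are exactly what keep $n+d-a$ inside $[0,n]$ and make $a-d\in[0,n-a]$, so that both ${\bf J}_{n+d-a}^a$ and ${\bf J}_{n-a,a-d}$ are genuinely of the claimed form. Once this matching of bounds is confirmed, the cut map and its concatenation inverse are mutually inverse by inspection, and the lemma follows.
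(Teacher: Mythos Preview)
Your proof is correct and is precisely the argument the paper has in mind: the lemma is stated there as ``immediate'' with no proof given, and the obvious cut-at-$a$ bijection you spell out is exactly the intended justification. Your bookkeeping of the threshold $n+d-a$ (serving as the upper bound for $\jj'$ and, after the shift by $a$, the lower bound defining ${\bf J}_{n-a,a-d}$) is the only nontrivial point, and you handle it correctly.
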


\begin{proof} 
Clearly, for any sequence $\jj\in {\bf J}_n^n(a,d)$ its subsequence $\jj'=(j_1,\dots, j_a)$
belongs to ${\bf J}_{n+d-a}^a$ and the subsequence $\jj''=(j_{a+1},\dots, j_n)$ belongs
to $T^a(\overline {{\bf J}_{n-a}^{a-d}})$.

Conversely, it is also clear that for any sequences $\jj'\in {\bf J}_{n+d-a}^a$ and $\jj''\in T^a(\overline {{\bf J}_{n-a}^{a-d}})$ their concatenation $\jj=(\jj',\jj'')$ belongs to ${\bf J}_n^n(a,d)$. 

The lemma is proved.
\end{proof}

For any two sequences of integers $\jj'=(j_1',\dots, j_k')$ and $\jj''=(j_1'',\dots, j_{\ell}'')$ define the
shifted concatenation by
$\jj'\bullet \jj'':=(\jj', T^k(\jj''))$.
We use now an obvious fact that if $\jj'=(j_1',\dots, j_k')$, $\jj''=(j_1'',\dots, j_{\ell}'')$
then
$$y_{\jj'\bullet \jj''}=y_{\jj'}y_{\jj''}\ .$$
   
Then, applying this formula to
$\jj=(\jj',T^a(\jj''))\in {\bf J}_{n+d-a}^a\times T^a(\overline {{\bf J}_{n-a}^{a-d}})$, 
we obtain:
$$\displaystyle {
\tilde C_{n+1}^n=\sum\limits_{\jj\in {\bf J}_n^n} y_\jj=
\sum\limits_{
\substack{a\in [\max(0,d),n],\\\jj'\in {\bf J}_{n+d-a}^a,\jj''\in \overline {{\bf J}_{n-a}^{a-d}}}} 
y_{\jj'}y_{\jj''}=\sum_{a\in [\max(0,d),n]} \tilde C_{n+d-a}^a\cdot (\overline {C_{n-a}^{a-d}}x_0^{-1})\ .}
$$ 
This proves \eqref{eq:tilde Cnn}.

Theorem \ref{th:catalan via truncated catalans} is proved. \endproof


\subsection{Proof of Theorems \ref{th:mult binom} and \ref{th:alternating recursion}}
\label{subsect:3.2} 
For any set $X$ and  $k\ge 0$ denote by $\left \lbrace \begin{matrix} X \\ k \end{matrix}\right\rbrace$ the
set of all subsets $J\subset X$ of cardinality $|J|=k$. 
Clearly, 
$\left \lbrace \begin{matrix} [1,m+n] \\ k \end{matrix}\right\rbrace=\bigsqcup\limits_{\substack {a,b\in \ZZ_{\ge 0}:\\a+b=k}}\left \lbrace \begin{matrix} [1,m] \\ a \end{matrix}\right\rbrace\times T^m\left(\left \lbrace \begin{matrix} [1,n] \\ b \end{matrix}\right\rbrace \right)$ for all $m,n,k\in \ZZ_{\ge 0}$ in the notation of Lemma \ref{le:Jdecomposition}, where we view each $J\in \left \lbrace \begin{matrix} [1,n] \\ k \end{matrix}\right\rbrace$ naturally as an element of $\ZZ^b$.

Taking into account that for $J=(J',T^m(J''))\in \left \lbrace \begin{matrix} [1,m] \\ a \end{matrix}\right\rbrace\times T^m\left(\left \lbrace \begin{matrix} [1,n] \\ b \end{matrix}\right\rbrace \right)$, $a+b=k$,  
we have $y_J=T^{m+a}(y_{J''})y_{J'}$ and $y'_J=T^b(y'_{J'})T^{m-a}(y'_{J''})$, we obtain for $m,n,k\in \ZZ_{\ge 0}$:
$${\stretchleftright{\llparenthesis}{\begin{matrix} m+n \\ k \end{matrix}}{\rrparenthesis}}=
\sum\limits_{J\in \left \lbrace \begin{matrix} [1,m+n] \\ k \end{matrix}\right\rbrace} y_J
=\sum\limits_{\substack{a,b\in \ZZ_{\ge 0}:a+b=k,\\J'\in \left \lbrace \begin{matrix} [1,m] \\ k \end{matrix}\right\rbrace,J''\in \left \lbrace \begin{matrix} [1,n] \\ k \end{matrix}\right\rbrace}} 
T^{m-a}(y_{J''})y_{J'}=
\sum\limits_{\substack{a,b\in \ZZ_{\ge 0}:\\a+b=k}} T^{m+a}\left({\stretchleftright{\llparenthesis}{\begin{matrix} n \\ b \end{matrix}}{\rrparenthesis}}\right) {\stretchleftright{\llparenthesis}{\begin{matrix} m \\ a \end{matrix}}{\rrparenthesis}}\ ,$$ 

$${\stretchleftright{\llparenthesis}{\begin{matrix} m+n \\ k \end{matrix}}{\rrparenthesis}}'=
\sum\limits_{J\in \left \lbrace \begin{matrix} [1,m+n] \\ k \end{matrix}\right\rbrace} y'_J
=\sum\limits_{\substack{a,b\in \ZZ_{\ge 0}:a+b=k,\\J'\in \left \lbrace \begin{matrix} [1,m] \\ k \end{matrix}\right\rbrace,J''\in \left \lbrace \begin{matrix} [1,n] \\ k \end{matrix}\right\rbrace}} 
T^b(y'_{J'})T^{m-a}(y'_{J''})=\sum\limits_{\substack{a,b\in \ZZ_{\ge 0}:\\a+b=k} }  T^b\left({\stretchleftright{\llparenthesis}{\begin{matrix} m \\ a \end{matrix}}{\rrparenthesis}}'\right) T^{m-a}\left({\stretchleftright{\llparenthesis}{\begin{matrix} n \\ b \end{matrix}}{\rrparenthesis}}'\right)\ .$$

Theorem \ref{th:mult binom} is proved. 
\endproof 

\medskip
\noindent
{\bf Proof of Theorem \ref{th:alternating recursion}}. 
For each $0\le j\le k\le  n$ denote by ${\bf I}_{j,k;n}$ the set of all $\ii=(i_1,\ldots,i_k)\in \ZZ_{\ge 1}^k$ such that 
$i_j\le n+k+1-2j$, $i_{j+1}\le n+k-1-2j$, $i_s\le i_{s+1}+1$ for all $s\in [1,j]$,  and $i_s>i_{s+1}+1$ for all $s\in [j+1,k]$. 
(with the convention that if  $j\in \{0,k\}$, then meaningless inequalities are omitted and ${\bf I}_{-1,k;n}={\bf I}_{k+1,k;n}=\emptyset$).

The following statement is straightforward. 

\begin{lemma} 
\label{le:counting summands y} 
$\tilde C_{n+k-j}^j\cdot {\stretchleftright{\llparenthesis}{\begin{matrix} n-j \\ k-j \end{matrix}}{\rrparenthesis}}=\sum\limits_{\ii\in {\bf I}_{j,k;n}} Y_\ii$ for all $0\le j\le k$, where we abbreviate $Y_\ii:=y_{i_1}\cdots y_{i_k}$.

\end{lemma}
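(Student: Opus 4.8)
The plan is to expand both factors as sums of $y$-monomials, form their product, and reparametrize the resulting length-$k$ words $Y_\ii=y_{i_1}\cdots y_{i_k}$ so that they are indexed exactly by ${\bf I}_{j,k;n}$. First I would expand $\tilde C_{n+k-j}^j$ via Proposition \ref{pr:recursion Cnk}(a): it is the sum of $y_\jj=y_{j_1}y_{j_2-1}\cdots y_{j_j-j+1}$ over $\jj=(j_1,\ldots,j_j)\in {\bf J}_{n+k-j}^j$, i.e. over $1\le j_1\le\cdots\le j_j\le n+k-j$ with $j_s\ge s$. Writing the $s$-th letter as $y_{i_s}$ with $i_s:=j_s-s+1$ for $s\in[1,j]$ turns these defining inequalities into $i_s\ge 1$, the ascending-block conditions $i_s\le i_{s+1}+1$ for $s\in[1,j-1]$, and the top constraint $j_j\le n+k-j$ into $i_j\le n+k+1-2j$; this substitution is a bijection, since $j_s=i_s+s-1$ recovers $\jj$.

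Next I would expand ${\stretchleftright{\llparenthesis}{\begin{matrix} n-j \\ k-j \end{matrix}}{\rrparenthesis}}=\sum_J y_J$ with $y_J=y_{l_{k-j}+k-j-1}\cdots y_{l_2+1}y_{l_1}$, the sum over subsets $J=\{l_1<\cdots<l_{k-j}\}\subseteq[1,n-j]$. Reading this word left to right and setting $i_{j+t}:=l_{k-j-t+1}+(k-j-t)$ places its letters in positions $j+1,\ldots,k$; because the $l_p$ strictly increase, the resulting indices obey the descending-block conditions $i_s>i_{s+1}+1$ for $s\in[j+1,k-1]$, while $l_1\ge 1$ gives $i_k\ge 1$ and the single upper bound $l_{k-j}\le n-j$ gives $i_{j+1}\le n+k-1-2j$. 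This is again a bijection. I would then observe that the product $\tilde C_{n+k-j}^j\cdot{\stretchleftright{\llparenthesis}{\begin{matrix} n-j \\ k-j \end{matrix}}{\rrparenthesis}}$ is the sum of all concatenations $y_\jj\,y_J=Y_\ii$, and that $(\jj,J)\mapsto\ii$ is injective because the first $j$ letters of $Y_\ii$ recover $\jj$ and the last $k-j$ recover $J$. Hence each word occurs with coefficient exactly $1$, and the tuples $\ii$ that arise are precisely those whose first $j$ coordinates satisfy the ascending-block inequalities and whose last $k-j$ coordinates satisfy the descending-block inequalities together with the two displayed upper bounds, which is the set ${\bf I}_{j,k;n}$ (the inequality straddling the two blocks at $s=j$, and the one referring to the undefined $i_{k+1}$ at $s=k$, being the ones dropped by the stated convention). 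The cases $j\in\{0,k\}$ follow by reading the vanishing factor as $1$.

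This is essentially bookkeeping, so I do not expect a deep obstacle; the only step demanding care is the binomial factor, whose letters are written in \emph{reversed} order with descending shifts $y_{l_{k-j}+k-j-1}\cdots y_{l_1}$. Getting the substitution $i_{j+t}=l_{k-j-t+1}+(k-j-t)$ exactly right — so that strict increase of the $l_p$ becomes the gap condition $i_s>i_{s+1}+1$ and the lone upper bound $l_{k-j}\le n-j$ becomes $i_{j+1}\le n+k-1-2j$ — is where an index slip is most likely, and it is worth checking against the small cases $k\le 2$ recorded in the examples before committing to it.
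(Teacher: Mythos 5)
Your proof is correct and supplies exactly the direct verification that the paper leaves implicit (the paper states this lemma with no argument beyond ``the following is immediate''): expand $\tilde C_{n+k-j}^j$ via Proposition \ref{pr:recursion Cnk}(a) and the binomial factor from its definition, reindex each block by $i_s=j_s-s+1$ and $i_{j+t}=l_{k-j-t+1}+(k-j-t)$, and observe that concatenation gives a bijection of index pairs onto ${\bf I}_{j,k;n}$; all of your index substitutions check out. Your reading that the inequality straddling the two blocks at $s=j$ (and the vacuous one at $s=k$) must be omitted from the definition of ${\bf I}_{j,k;n}$ is indeed the intended one --- with the straddling inequality imposed, ${\bf I}_{j,k;n}^-={\bf I}_{j-1,k;n}\cap{\bf I}_{j,k;n}$ in Lemma \ref{le:alternation cancelation} would be empty --- so flagging it was the right call.
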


For $j\in [0,k+1]$ denote ${\bf I}_{j,k;n}^-={\bf I}_{j- 1,k;n}\cap {\bf I}_{j,k;n}$. 
By definition, ${\bf I}_{0,k}^-={\bf I}_{k+1,k}^-=\emptyset$ and the following is immediate.

\begin{lemma} 
\label{le:alternation cancelation}
${\bf I}_{j,k;n}^-$ is the set of all $\ii=(i_1,\ldots,i_m)\in {\bf I}_{j,k;n}$ such that $i_j\le i_{j+1}+1$ for all $j\in [0,k]$. 
In particular, ${\bf I}_{j,k;n}={\bf I}_{j,k;n}^-\sqcup {\bf I}_{j+1,k;n}^-$ for $j\in [0,k]$.

\end{lemma}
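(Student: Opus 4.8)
The plan is to establish the set identities of Lemma \ref{le:alternation cancelation} by directly comparing the defining inequalities of the sets ${\bf I}_{j,k;n}$, and then to combine them with Lemma \ref{le:counting summands y} in a telescoping cancellation that proves Theorem \ref{th:alternating recursion}.

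First I would unwind the definitions. A sequence $\ii\in\ZZ_{\ge 1}^k$ lies in ${\bf I}_{j,k;n}$ exactly when its initial block is a weakly ascending run (the Catalan side, $i_s\le i_{s+1}+1$), its terminal block is a strictly descending run (the binomial side, $i_s>i_{s+1}+1$), the boundary entries $i_j,i_{j+1}$ meet the stated size bounds, and the single junction position $j$ separating the two blocks carries no monotonicity constraint of its own. Comparing ${\bf I}_{j-1,k;n}$ with ${\bf I}_{j,k;n}$, I would check that the intersection ${\bf I}_{j-1,k;n}\cap{\bf I}_{j,k;n}$ is cut out inside ${\bf I}_{j,k;n}$ by a single forced choice of regime at the junction; hence ${\bf I}_{j,k;n}^-$ is precisely the subset of $\ii\in{\bf I}_{j,k;n}$ whose junction lies in that regime, and symmetrically ${\bf I}_{j+1,k;n}^-={\bf I}_{j,k;n}\cap{\bf I}_{j+1,k;n}$ is the subset whose junction lies in the complementary regime.

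Because the pair $(i_j,i_{j+1})$ satisfies exactly one of $i_j\le i_{j+1}+1$ or $i_j>i_{j+1}+1$, these two subsets are disjoint and exhaust ${\bf I}_{j,k;n}$, which gives ${\bf I}_{j,k;n}={\bf I}_{j,k;n}^-\sqcup{\bf I}_{j+1,k;n}^-$; the conventions ${\bf I}_{-1,k;n}={\bf I}_{k+1,k;n}=\emptyset$ make the extreme sets ${\bf I}_{0,k;n}^-$ and ${\bf I}_{k+1,k;n}^-$ empty. To finish the theorem, set $S_j:=\sum_{\ii\in{\bf I}_{j,k;n}^-}Y_\ii$; applying Lemma \ref{le:counting summands y} and then this decomposition turns the left-hand side of Theorem \ref{th:alternating recursion} into
\[
\sum_{j=0}^k(-1)^j\sum_{\ii\in{\bf I}_{j,k;n}}Y_\ii
=\sum_{j=0}^k(-1)^j\bigl(S_j+S_{j+1}\bigr)
=S_0-(-1)^{k+1}S_{k+1},
\]
and the two surviving terms vanish since $S_0=S_{k+1}=0$.

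The telescoping algebra and the size-bound bookkeeping are routine. The step requiring genuine care, which I expect to be the main obstacle, is pinning down the boundary conventions at the junction and at the phantom position $s=k$ (where $i_{k+1}$ is undefined): the whole cancellation hinges on the junction being truly unconstrained inside ${\bf I}_{j,k;n}$ while being forced in each of ${\bf I}_{j\mp 1,k;n}$, and a single off-by-one in the index ranges would break the pairing of signs. Checking the expansions of Lemma \ref{le:counting summands y} in the cases $k=1,2$ is the cleanest way to confirm that the junction split lands in the intended pieces.
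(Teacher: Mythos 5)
Your argument is correct and matches the paper's intent: the paper declares the lemma ``immediate,'' and the intended justification is exactly your observation that the intersections ${\bf I}_{j,k;n}^-={\bf I}_{j-1,k;n}\cap{\bf I}_{j,k;n}$ and ${\bf I}_{j+1,k;n}^-={\bf I}_{j,k;n}\cap{\bf I}_{j+1,k;n}$ carve ${\bf I}_{j,k;n}$ into the two complementary regimes $i_j>i_{j+1}+1$ and $i_j\le i_{j+1}+1$ at the junction (the size bounds being automatic), after which the signed sum telescopes to $S_0+(-1)^kS_{k+1}=0$ just as in the paper's proof of Theorem \ref{th:alternating recursion}. Your flagged concerns about the boundary conventions at $j\in\{0,k\}$ and the phantom index $i_{k+1}$ are exactly the right places to be careful, and they check out.
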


Using Lemmas \ref{le:counting summands y}  and \ref{le:alternation cancelation}, we obtain for all $0< k\le n$:
$$\sum\limits_{j=0}^k (-1)^j \tilde C_{n+k-j}^j\cdot {\stretchleftright{\llparenthesis}{\begin{matrix} n-j \\ k-j \end{matrix}}{\rrparenthesis}}=\sum\limits_{j\in [0,k],\ii\in {\bf I}_{j,k;n}} (-1)^j Y_\ii=
\sum\limits_{j\in [0,k],\ii\in {\bf I}_{j,k;n}^-} (-1)^j Y_\ii +\sum\limits_{j\in [0,k],\ii\in {\bf I}_{j+1,k;n}^-} (-1)^j Y_\ii=0 \ .$$


Theorem \ref{th:alternating recursion} is proved.
\endproof

\subsection{Proof of Theorems \ref{th:Gauss} and \ref{th:inverse gauss}} 
\label{subsect:3.3} 
We prove Theorem \ref{th:Gauss} first. Indeed, the assertion is equivalent to
$(H_m)_{ij}=\sum\limits_{k=0}^{\min(i,j)} (L_m)_{ik}(U_m)_{kj}$, i.e., to 
$C_{m+i+j}=\sum\limits_{k=0}^{\min(i,j)} C_{i+k+m}^{i-k}\cdot x_{2k+m}^{-1} \overline {C_{k+j+m}^{j-k}}$
for all $i,j\in \ZZ_{\ge 0}$, $m\in \{0,1\}$. This identity coincides with that from Theorem \ref{th:catalan via truncated catalans} taken with $n=m+i+j$, $a=i-k$, $b=j-k$, $d=i-j$.

Theorem \ref{th:Gauss} is proved. \endproof

\medskip
\noindent
{\bf Proof of Theorem \ref{th:inverse gauss}}. It suffices to do so only for $L^-_m$ (the argument for $U^-_m$ is identical). Indeed, the assertion is equivalent to 
$\sum\limits_{k'=i'}^{j'} (L_m)_{j'k'}(L^-_m)_{k'i'}=\delta_{i'j'}$, i.e., to 
$\sum\limits_{k'=i'}^{j'} \tilde C_{j'+k'+m}^{j'-k'}\cdot (-1)^{i'+k'}{\stretchleftright{\llparenthesis}{\begin{matrix} i'+k'+m \\ k'-i' \end{matrix}}{\rrparenthesis}}
=0$
for all $0\le i'<j'$. It is easy to show that this identity coincides with that from Theorem \ref{th:alternating recursion} taken with $n=i'+j'+m$, $j=j'-k'$ $k=j'-i'$.


Theorem \ref{th:inverse gauss} is proved. \endproof

%
%
%
%
%
%
%
%
%
%
%

\subsection{Proof of Theorems \ref{th:Garsia-Haiman det}, \ref{th:Cnk via binomials}} 
\label{subsect:3.4}
We start with a proof of Theorem \ref{th:Cnk via binomials}. The following  is well-known. 

\begin{lemma} 
\label{le:lower triangular inverse}
Any lower unitriangular $\ZZ_{\ge 0}\times \ZZ_{\ge 0}$ matrix $A=(a_{ij})$ over an associative unital ring ${\mathcal A}$ is invertible and 
$(A^{-1})_{ji}=\sum\limits_{j=i_1>i_2>\cdots >i_k=i,k\ge 1} (-1)^{k-1}a_{i_1,i_2}\cdots a_{i_{k-1},i_k}$
for all $1\le i\le j\le n$.

\end{lemma}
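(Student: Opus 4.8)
The plan is to reduce to the finite-dimensional case and then expand a Neumann series. First I would note that, since both factors of any product of lower triangular $\ZZ_{\ge 0}\times\ZZ_{\ge 0}$ matrices are lower triangular, every entry of such a product is a \emph{finite} sum, so multiplication is well-defined in this class and the inverse of $A$ (once shown to exist) is again lower unitriangular. The key observation for rigor is that for $j\ge i$ the entry $(A^{-1})_{ji}$ depends only on the principal block of $A$ on the index set $\{i,i+1,\ldots,j\}$: solving $Ax=e_i$ by back-substitution gives $x_i=1$ and $x_l=-\sum_{m=i}^{l-1}a_{lm}x_m$ for $i<l\le j$, a recursion involving only the entries $a_{lm}$ with $i\le m\le l\le j$. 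This lets me replace $A$ by a finite $(j-i+1)\times(j-i+1)$ lower unitriangular matrix and sidestep all convergence questions.

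Next I would write $A=I+N$, where $N$ is the strictly lower triangular part, so $N_{ji}=a_{ji}$ for $j>i$ and $N_{ji}=0$ otherwise. On a finite block $N$ is nilpotent: $N^m=0$ once $m$ exceeds the block size. Hence $B:=\sum_{m\ge 0}(-N)^m$ is a genuine polynomial in $N$, and the telescoping computation $(I+N)\sum_{m=0}^{M}(-N)^m=I-(-N)^{M+1}$, valid because $N$ commutes with itself, yields $(I+N)B=I$ for $M$ past the nilpotency index, and symmetrically $B(I+N)=I$. This simultaneously proves invertibility and identifies $A^{-1}=B$ on the relevant block, hence in general by the reduction above.

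Finally I would read off the entries of $B$ by expanding each power as a sum over directed paths in the weighted digraph on the indices: $(N^m)_{ji}=\sum a_{i_1i_2}a_{i_2i_3}\cdots a_{i_mi_{m+1}}$, the sum taken over strictly decreasing chains $j=i_1>i_2>\cdots>i_{m+1}=i$. Reindexing with $k=m+1$, so that a chain with $k$ entries contributes the product of its $k-1$ consecutive factors with sign $(-1)^m=(-1)^{k-1}$, turns $B_{ji}=\sum_{m\ge 0}(-1)^m(N^m)_{ji}$ into exactly the stated formula; the degenerate chain $k=1$ (forcing $j=i$, empty product) contributes $\delta_{ji}$, matching the unitriangular diagonal.

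I expect no serious obstacle, as this is a standard combinatorial inversion identity; the only point demanding care is the bookkeeping in the infinite $\ZZ_{\ge 0}\times\ZZ_{\ge 0}$ setting, and the reduction to the finite principal block on $\{i,\ldots,j\}$ handles this cleanly, making the naive geometric-series manipulation over the ring $\AA$ fully rigorous.
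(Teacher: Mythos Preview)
Your proof is correct. The paper itself does not prove this lemma at all; it simply labels it a ``well-known fact'' and uses it immediately in the proof of Theorem~\ref{th:Cnk via binomials}. Your Neumann-series argument---writing $A=I+N$ with $N$ strictly lower triangular, noting $N$ is nilpotent on each finite principal block, and reading off $(A^{-1})_{ji}=\sum_{m\ge 0}(-1)^m(N^m)_{ji}$ as a sum over strictly decreasing chains---is the standard way to justify this identity, and you have handled the one genuine subtlety (the infinite index set) cleanly via the back-substitution reduction to the block on $\{i,\ldots,j\}$.
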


Applying Lemma \ref{le:lower triangular inverse} with $A=L^-_m$, i.e., $a_{ji}=\tilde C_{i+j+m}^{i-j}$ and using Theorem \ref{th:inverse gauss} in the form  $(A^{-1})_{ji}=(-1)^{i+j}{\stretchleftright{\llparenthesis}{\begin{matrix} i+j+m \\ j-i \end{matrix}}{\rrparenthesis}}$, 
we obtain the first identity. Swapping $A$ and $A^{-1}$, we obtain the second one.

Theorem \ref{th:Cnk via binomials} is proved. \endproof

\medskip
\noindent
{\bf Proof of Theorem \ref{th:Garsia-Haiman det}}. Recall from \cite{gr} that for any matrix over a commutative ring, its determinant equals the product of its principal quasiminors. Let $\underline H_m^n=\chi_q(H_m^n)=(c_{i+j+m}(q,1))$, $i,j=0,\ldots,n$, 
where $\chi_q:\ZZ F \to \ZZ[q,q^{-1}]$ is  defined in Section \ref{sect:main}.
Since all principal submatrices of $\underline H_m^n$ are $\underline H_m^k$, $k=0,1,\ldots,n$, these and Corollary \ref{cor:quasidet} guarantee that  
$\det(\underline  H_m^n)=\prod\limits_{k=0}^n \chi_q(x_{m+2k})=q^{\sum\limits_{k=0}^m\frac{(m+2k)(m+2k-1)}{2}}=q^{\frac{n(n+1)(4n-1+6m)}{6}}$.

Theorem \ref{th:Garsia-Haiman det} is proved. 
\endproof

%
%
%
%
%
%
%

  \end{document}